\newtheorem{theorem}{Theorem}
\newtheorem{lemma}[theorem]{Lemma}
\newtheorem{proposition}[theorem]{Proposition}
\newtheorem{corollary}[theorem]{Corollary}
\theoremstyle{definition}
\newtheorem{problem}{Problem}
\DeclareMathOperator{\Aut}{Aut}
\DeclareMathOperator{\Out}{Out}
\DeclareMathOperator{\Cay}{Cay}
\DeclareMathOperator{\CM}{CM}
\DeclareMathOperator{\AGL}{AGL}
\DeclareMathOperator{\PSL}{PSL}
\DeclareMathOperator{\PGL}{PGL}
\DeclareMathOperator{\GL}{GL}
\DeclareMathOperator{\PGAL}{P\Gamma L}
\DeclareMathOperator{\Sym}{S}
\DeclareMathOperator{\ZZ}{\mathbb{Z}}
\DeclareMathOperator{\CC}{C}
\DeclareMathOperator{\DD}{D}
\DeclareMathOperator{\Alt}{A}
\DeclareMathOperator{\Mat}{M}
\DeclareMathOperator{\ch}{~char~}
   \def\lg{\langle} \def\rg{\rangle}
\def\og{\overline G} \def\oh{\overline H} \def\ok{\overline K}  \def\ox{\overline X}
\newcommand{\red}[1]{{\color[rgb]{1,0,0}{#1}}}
\title[]{On exact products of two dihedral groups}
\author[K.~Hu\and H. Yu]
{ Kan Hu\and Hao Yu$^{*}$}
\address{K. Hu,
\newline\indent
Department of Mathematics, Zhejiang Ocean University, Zhoushan, Zhejiang 316022, P.R. China
}
\email{hukan@zjou.edu.cn}
\address{H. Yu
\newline\indent
School of Mathematical Sciences,Capital Normal University, Beijing 100048, P.R. China
}
\email{3485676673@qq.com}
\thanks{{This research was supported by National Natural Science Foundation of China (11801507, 12071312).}
\newline\indent
$^{\ast}$ Corresponding author e-mail: 3485676673@qq.com
}
\keywords{group factorization, exact product, dihedral group, regular subgroup}
\subjclass[2010]{05C10, 05C25, 57M15}
\begin{document}
\maketitle
\begin{abstract}
An exact product of two finite groups $H$ and $K$ is a finite group $X$ which contains $H$ and
$K$ as subgroups, satisfying $X=HK$ and $H\cap K=\{1_X\}$. In this paper,
we provide a classification of the exact products of two dihedral groups
of orders $2m$ and $2n$  for all odd numbers $m,n\geq 3$.
\end{abstract}
\section{Introduction}
Throughout the paper, groups considered are all finite.
A group $X$ is \textit{factorizable} if $X$ contains two subgroups $H$ and $K$
such that $X=HK$. In this case, we say that $H$ and $K$ furnish a \textit{factorization}.
The factorization $X=HK$ is \textit{proper} if both $H$ and $K$ are non-trivial subgroups of $X$,
and \textit{maximal} if both $H$ and $K$ are maximal subgroups of $X$.
Moreover, the factorization is \textit{exact}
if $H\cap K=\{1_X\}$, and in this case we also
say that $X$ is an \textit{exact product} of $H$ and $K$.
The following factorization problem arises naturally:
For a given group $X$, determine whether or not $X$ admits a proper  factorization,
and if so, find all proper factorizations of $X$; in particular, identify all maximal or exact factorizations of $X$.

In this paper we consider the factorization problem formulated in a slightly different way:
\begin{problem}\label{FP}
For two specified finite groups $H$ and $K$, describe and classify  the products, or more specifically,
the exact products $X=HK$ of $H$ and $K$.
\end{problem}
 Several famous results on this problem were obtained by some pioneering group theorists
in the 20th century. It\^o established that products of two abelian groups are metabelian~\cite{Ito1955},
 Douglas proved that products of two cyclic groups are supersolvable~\cite{Douglas1961}.
 Meanwhile Wielandt and Kegal~\cite{Kegel1961,Wielandt1951} showed that products of two nilpotent groups are
 necessarily solvable.

 Recent interest in exact products of two finite groups has focused on primitive
 permutation groups containing a regular subgroup and factorizations of finite almost simple
 groups~\cite{LPS2010,LWX2023,LPS1996, Xia2017}.
Hering, Liebeck and Saxl classified factorizations of exceptional groups of Lie type~\cite{HLS1987},
and the landmark work by Liebeck, Praeger, and Saxl classified maximal factorizations of almost simple groups~\cite{LPS1996}. Recently, all exact factorizations of the almost simple groups have been
classified by Li, Wang and Xia~\cite{LWX2023}.

A group is \textit{bicyclic} if it is a product of two cyclic subgroups. One of the longstanding
unsolved problems in this field is the classification of finite bicyclic groups. Besides the
aforementioned classical result by Douglas on supersolvability of bicyclic groups,
Huppert established that every bicyclic $p$-group  is metacyclic if $p>2$ is an odd prime~\cite{Huppert1967}.
However, it is well known that non-metacyclic bicyclic $2$-groups exist. The metacyclic $p$-groups
 were classified by Xu and Newman  for odd prime $p$~\cite{NX1988}, and  by Xu and Zhang
 for $p=2$. The bicyclic $2$-groups were eventually classified by Janko~\cite{Janko2008}.
Some results directly related to bicyclic groups
can be found in the context of regular embeddings of complete bipartite
graphs into orientable surfaces~\cite{Jones2010} , and in the context of Hopf algebras
arising from exact factorizations of bicyclic groups~\cite{ABM2014,ACIM2009}.
To the best of our knowledge it remains a challenging open
problem in the general case.

The interest in exact factorization with a cyclic factor extends to combinatorics.
A group $X$ is called a \textit{cyclic complementary extension} of $H$ if $X$ is an exact product of $H$
and a cyclic complement $K=\lg c\rg$; $X$ is also said to admit a \textit{cyclic complementary factorization}.
Cyclic complementary extensions of a given group $H$ have been intensively investigated
in the context of regular Cayley maps, and more generally, within the theory of skew morphisms.
Without going into the details, a \textit{Cayley map} $M:=\CM(H,S,\rho)$ is an embedding
of an (undirected, simple, and connected) Cayley graph $\Gamma:=\Cay(H,S)$ on
 a finite group $H$ into an oriented surface, where the  local rotation $R$ at each vertex
 is defined uniformly by setting
 \[
 R(h,hs)=R(h,h\rho(s)), h\in H, s\in S,
 \]
 where $S$ is an inverse-closed generating set of $H$ with $1_H\notin S$, and $\rho$
 is a full cycle on $S$. It is well known that every Cayley map contains a vertex-regular subgroup of
 automorphisms.  In the case where it is also arc-regular, it is termed a \textit{regular Cayley map}.
 It turns out that the automorphism group $X:=\Aut(M)$ of a regular Cayley map $M$ has
 a cyclic complementary factorization $X=H_LX_{1_H}$, where $H_L$ is the left regular representation
 of $H$, and $X_{1_H}$ is the (cyclic) stabilizer of the vertex $1_H$ of the map.
 Therefore, if we identify $H$ with $H_L$, the group $X$ can be regarded as a cyclic complementary extension
 of $H$.

 Every cyclic complementary extension $X=HK$ of a finite group $H$ by a cyclic complement $K=\lg c\rg\cong\CC_n$
gives rise to a permutation $\varphi$ on $H$, called a \textit{skew morphism}  of $H$,
 and an integer-valued function $\Pi:H\to\ZZ_n$ on $H$, called the \textit{extended power function}
 of $\varphi$,  which satisfy the following defining conditions:
 \begin{enumerate}[\rm(a)]
\item $\varphi(1_H)=1_H,$ and $\varphi(gh)=\varphi(g)\varphi^{\Pi(g)}(h)$ for all $g,h\in H$;
\item  $ \Pi(1_H)=1_H,$  and $ \Pi(gh)=\sum_{i=1}^{\Pi(g)}\Pi(\varphi^{i-1}(h))\pmod{n}$
for all $g,h\in H$.
 \end{enumerate}
 Remarkably, the cyclic complementary extension of $H$ by $K$ can be recovered from
 $\varphi$ and $\Pi$ in a canonical way~\cite{HJ2023}. In particular, a Cayley map $\CM(H,S,\rho)$
 is regular if and only if $\rho$ extends to a skew morphism of $H$~\cite[Theorem 1]{JS2002}.
  In this context  the cyclic complementary extensions of various families of groups
 have been investigated, including cyclic groups~\cite{CJT2016,CT2014,  HKK2024},
 dihedral groups~\cite{HKK2022, KK2021, ZD2016},  simple groups~\cite{BCV2022} and characteristically simple
 groups~\cite{CDL2022,DLYZ2023, DLY2023, DYL2023}, among others.

The main concern of the current paper is exact products of two dihedral groups,
which have important applications in unoriented regular Cayley maps~\cite{KK2006, HKK2024, Yu2024}.
Our main result is the following theorem, which determines the exact products of
two non-abelian dihedral groups $H\cong\DD_{2n}$ and $K\cong\DD_{2m}$ for all odd numbers $m,n\geq 3$

\begin{theorem}\label{main}
If $n,m\geq 3$ are odd numbers, then every exact product $X=HK$ of two
 dihedral groups $H=\lg x\rg\rtimes\lg y\rg\cong\DD_{2n}$ and
$K=\lg z\rg\rtimes\lg w\rg\cong\DD_{2m}$ has a presentation
\begin{equation}\label{Pre}
\begin{aligned}
X=\langle x,y,z,w|&x^n=y^2=z^m=w^2=[x,z]=1, x^y=x^{-1}, z^{w}=z^{-1},   \\
&[x,w]=x^{sn_1}z^{bm_1},[z,y]=x^{rn_1}z^{am_1},   [y,w]=x^{tn_1}z^{cm_1}\rangle,
\end{aligned}
\end{equation}
where $m_1$ and $n_1$ are positive divisors of $m$ and $n$ such that $\lg z\rg_X=\lg z^{m_1}\rg$ and
$\lg x\rg_X=\lg x^{n_1}\rg$,  the integers $a,b,c\in\mathbb{Z}_{m/m_1}$
and $r,s,t\in\mathbb{Z}_{n/n_1}$ satisfy the following conditions:
\begin{enumerate}[\rm(a)]
  \item $a(2+am_1)\equiv b(2+am_1)\equiv c(2+am_1)\equiv0\pmod{m/m_1},$
  \item $r(2+sn_1)\equiv s(2+sn_1)\equiv t(2+sn_1)\equiv0\pmod{n/n_1},$
  \item \red{$bk\equiv0\pmod{m/m_1}\iff k\equiv0\pmod{n_1},$ for any integer $k$,}
  \item \red{$rk\equiv0\pmod{n/n_1}\iff k\equiv0\pmod{m_1},$  for any integer $k$.}
\end{enumerate}
\end{theorem}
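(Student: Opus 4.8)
The plan is to locate inside $X$ a normal subgroup $A=\langle x\rangle\times\langle z\rangle\cong\CC_n\times\CC_m$ with $X/A\cong\CC_2\times\CC_2$, and then to read off (\ref{Pre}) and the conditions (a)--(d) from the induced action of $X$ on $A$. I expect the existence of $A$ to be the main obstacle; once it is in hand, the rest is a (careful) computation with commutator identities in the metabelian group $X$.

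\emph{Step 1: producing $A$.} Exactness gives $|X|=4mn$ and a unique expression $g=hk$ for every $g\in X$; since $n$ is odd, $\langle x\rangle=H'$ is the unique subgroup of $H$ of order $n$, and $\langle z\rangle=K'$ the unique one of order $m$ in $K$. I would first use the classical theorem on factorized groups to pick a Sylow $2$-subgroup $T$ of $X$ with $T=(T\cap H)(T\cap K)$, $T\cap H\in\mathrm{Syl}_2(H)$, $T\cap K\in\mathrm{Syl}_2(K)$; then $|T|=4$ and, being a product of two distinct subgroups of order $2$, $T\cong\CC_2\times\CC_2$. As $T$ is Sylow, $N_X(T)/C_X(T)$ has odd order, so is trivial or cyclic of order $3$; the latter would make the involution of $T\cap H$ conjugate in $X$ to that of $T\cap K$, which is impossible because in the regular action of $X$ on the $2n$ cosets $X/K$ the former has no fixed point (it lies in the regular subgroup $H$) while the latter fixes the coset $K$. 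Hence $N_X(T)=C_X(T)$, and Burnside's normal $p$-complement theorem yields a normal subgroup $A:=O_{2'}(X)$ of order $mn$. Now $H\cap A\lhd H$ has odd order and index in $H$ dividing $|X:A|=4$, so that index is $2$, forcing $H\cap A=\langle x\rangle$, and similarly $K\cap A=\langle z\rangle$; comparing orders gives $A=\langle x\rangle\langle z\rangle$, and as $X=\langle x,y,z,w\rangle$ with $x,z\in A$ we get $X/A=\langle\bar y,\bar w\rangle\cong\CC_2\times\CC_2$. The delicate remaining point is that $A$ is abelian, i.e. $[x,z]=1$: this genuinely requires both dihedral factors --- for a product of $\DD_{2n}$ with a cyclic group the analogous subgroup can be non-abelian --- and I would prove it by a separate argument combining the conjugation actions of $y$ and $w$ with fixed-point counts in the two regular actions of $X$ on $X/H$ and $X/K$. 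Granting this, $A\cong\CC_n\times\CC_m$, which is where the hard work lies.

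\emph{Step 2: the relations and the cores.} The relations $x^n=y^2=z^m=w^2=[x,z]=1$, $x^y=x^{-1}$, $z^w=z^{-1}$ are now immediate (the last three hold inside $H$ and $K$), and $[x,w],[z,y],[y,w]\in A$ since $A\lhd X$; write $[x,w]=x^{s'}z^{b'}$, $[z,y]=x^{r'}z^{a'}$, $[y,w]=x^{t'}z^{c'}$, so that $\bar w$ acts on $A$ by $x\mapsto x^{1+s'}z^{b'}$, $z\mapsto z^{-1}$ and $\bar y$ by $x\mapsto x^{-1}$, $z\mapsto x^{r'}z^{1+a'}$. Because $y$ inverts and $z$ centralises $\langle x\rangle$, the $X$-core of $\langle x\rangle$ equals $\langle x\rangle\cap\langle x\rangle^{w}=\langle x\rangle\cap\langle x^{1+s'}z^{b'}\rangle$; from $x^{w^2}=x$ one computes $(1+s')^2\equiv1\pmod n$, hence $\gcd(1+s',n)=1$, and since $\langle x^{1+s'}z^{b'}\rangle$ has order $n$ this forces $m/\gcd(m,b')\mid n$ and $\langle x\rangle_X=\langle x^{n_1}\rangle$ with $n_1=m/\gcd(m,b')$ --- equivalently condition (c), once $b'$ is written as $bm_1$. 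Dually $\langle z\rangle_X=\langle z^{m_1}\rangle$ with $m_1=n/\gcd(n,r')$ and condition (d).

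\emph{Step 3: divisibilities and conditions (a), (b).} Comparing the two ways of conjugating $[x,w],[z,y],[y,w]$ by $w$ and by $y$ --- using the relations $[y,w]=(yw)^2$ and $[y,w]^{y}=[y,w]^{w}=[y,w]^{-1}$ together with the identity $[a,bc]=[a,c]\,[a,b]^c$ --- produces the congruences $m\mid b's',\ m\mid r'b',\ m\mid t'b'$ and $n\mid r'a',\ n\mid r'b',\ n\mid r'c'$, from which $n_1\mid s',r',t'$ and $m_1\mid a',b',c'$. Setting $s'=sn_1$, $r'=rn_1$, $t'=tn_1$, $a'=am_1$, $b'=bm_1$, $c'=cm_1$ with $a,b,c\in\ZZ_{m/m_1}$ and $r,s,t\in\ZZ_{n/n_1}$ then converts the relations of Step 2 into those of (\ref{Pre}). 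The same comparisons, together with $x^{w^2}=x$ and $z^{y^2}=z$, also give the six congruences $n\mid s'(s'+2),\ n\mid r'(s'+2),\ n\mid t'(s'+2)$ and $m\mid a'(a'+2),\ m\mid b'(a'+2),\ m\mid c'(a'+2)$, which in the new parameters read exactly as (a) and (b). This shows every exact product $X=HK$ has a presentation as in the theorem; to round off the classification one then checks the converse --- for parameters satisfying (a)--(d), the group presented by (\ref{Pre}) has order $4mn$ and is the exact product of $\langle x,y\rangle\cong\DD_{2n}$ and $\langle z,w\rangle\cong\DD_{2m}$ --- which is a direct, if lengthy, verification.
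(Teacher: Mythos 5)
Your Step 1 route to a normal subgroup $A$ of index $4$ is correct and genuinely different from the paper's: Wielandt's theorem on Sylow subgroups of factorized groups gives a Klein four Sylow $2$-subgroup $T=\langle y'\rangle\langle w'\rangle$, the fixed-point count in the regular actions on $X/H$ and $X/K$ shows the involutions of $H$ and $K$ are not conjugate, hence $N_X(T)=C_X(T)$, and Burnside yields $A=O_{2'}(X)=\langle x\rangle\langle z\rangle\lhd X$ with $X/A\cong\CC_2\times\CC_2$. This is arguably cleaner than the paper's path, which reaches the analogous normality statements through an analysis of quasiprimitive quotients (Lemmas~\ref{c1}--\ref{m-n-odd}, Corollary~\ref{Normal}) and imported results on exact products of a dihedral with a cyclic group (Propositions~\ref{DM} and~\ref{ac}).

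However, there is a genuine gap exactly where you flag it: you never prove that $A$ is abelian, i.e.\ that $[x,z]=1$, and everything in Steps 2--3 (the form of $x^w$, $y^z$, the core computations, and all of (a)--(d)) is predicated on it. This is not a formality one can "grant'': a product of two cyclic groups of odd order with trivial intersection can be nonabelian (e.g.\ $\CC_7\rtimes\CC_3$), so the commutativity of $\langle x\rangle$ and $\langle z\rangle$ must be extracted from the actions of $y$ and $w$, and the phrase ``a separate argument combining the conjugation actions of $y$ and $w$ with fixed-point counts'' is not such an argument. In the paper this is the heart of the proof: one first shows $H\lg z\rg$ and $\lg x\rg K$ are subgroups with $H$ normal modulo $\lg z\rg_X$ (Corollary~\ref{Normal}, which rests on Proposition~\ref{ac}), then applies the $N/C$ theorem twice --- to $\lg z\rg_X\lhd X$ and to $\lg\bar x\rg\lhd X/\lg z\rg_X$, each time using that a dihedral group of twice-odd order has no abelian quotient containing the rotation subgroup --- to conclude only that $x^z=xz_1^{e}$ with $z_1=z^{m_1}$; the vanishing of $e$ then requires the chain of congruences \eqref{cond4}--\eqref{cond7} culminating in $2e\equiv0\pmod{m/m_1}$ and the oddness of $m$. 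Some such argument (or a replacement of comparable substance) must be supplied before your Steps 2 and 3, which are otherwise routine and essentially parallel to the paper's computations, can begin. The concluding converse verification via Proposition~\ref{Cext} is also asserted rather than carried out, but that is a minor omission by comparison.
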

The paper is organized as follows. In Section~\ref{sec:pre} we gather all necessary notations, terminology, and known results from group theory for future reference. Section~\ref{sec:lems} presents important
characterizations on exact products of two dihedral groups, while the last Section~\ref{sec:class}  is dedicated to
proving Theorem~\ref{main}.

\section{Preliminaries}\label{sec:pre}
Throughout the paper, we denote the ring of residue classes of integers modulo a positive integer $n$ by $\ZZ_n$.
We also use $\CC_n,\DD_{2n}, \Alt_n$ and $\Sym_n$ to represent the cyclic group of order $n$, the dihedral group of order $2n$, the alternating group, and the symmetric group of degree $n$, respectively.
Additionally, we employ $\GL(n,q)$, $\PGL(n,q)$, $\PSL(n,q)$, and $\AGL(n,q)$ to denote the general linear group, the projective linear group, the projective special linear group, and the affine group, respectively,
 of dimension $n$ over a finite field $\mathbb{F}_q$ with $q:=p^e$ elements.

If $G$ is a group and $H$ is a subset of $G$, we denote the normalizer and centralizer of $H$ in $G$ by
 $N_G(H)$ and $C_G(H)$, respectively.
The following $N/C$ theorem is well known.
\begin{proposition}[\cite{Huppert1967}]
If $H$ is a subgroup of a group $G$, then $N_G(H)/C_G(H)$ is isomorphic to a subgroup of
$\Aut(H)$.
\end{proposition}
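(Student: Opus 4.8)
The plan is to construct an explicit homomorphism from $N_G(H)$ into $\Aut(H)$ whose kernel is precisely $C_G(H)$, and then invoke the first isomorphism theorem. The natural candidate is the conjugation action: for each $g\in N_G(H)$, I would define a map $\varphi_g\colon H\to H$ by $\varphi_g(h)=ghg^{-1}$. The first thing to verify is that $\varphi_g$ genuinely lands in $\Aut(H)$ rather than merely in the symmetric group on the underlying set. Since $g$ normalizes $H$, we have $gHg^{-1}=H$, so $\varphi_g$ maps $H$ into $H$; it is a bijection because $\varphi_{g^{-1}}$ is a two-sided inverse, and it respects multiplication because $\varphi_g(h_1h_2)=gh_1h_2g^{-1}=(gh_1g^{-1})(gh_2g^{-1})=\varphi_g(h_1)\varphi_g(h_2)$. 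Hence each $\varphi_g$ is indeed an automorphism of $H$.

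Next I would check that the assignment $\Phi\colon N_G(H)\to\Aut(H)$, $g\mapsto\varphi_g$, is a group homomorphism. This is the routine computation $\varphi_{g_1g_2}(h)=g_1g_2h(g_1g_2)^{-1}=g_1(g_2hg_2^{-1})g_1^{-1}=\varphi_{g_1}(\varphi_{g_2}(h))$, valid for all $h\in H$, so $\varphi_{g_1g_2}=\varphi_{g_1}\circ\varphi_{g_2}$. The final ingredient is the identification of the kernel. An element $g\in N_G(H)$ lies in $\ker\Phi$ exactly when $\varphi_g=\id_H$, i.e.\ when $ghg^{-1}=h$ for every $h\in H$, which is precisely the condition that $g$ centralizes $H$. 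Thus $\ker\Phi=C_G(H)$, and in particular this shows $C_G(H)\trianglelefteq N_G(H)$ as a by-product.

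Applying the first isomorphism theorem to $\Phi$ then yields $N_G(H)/C_G(H)\cong\operatorname{im}(\Phi)$, and since $\operatorname{im}(\Phi)$ is a subgroup of $\Aut(H)$, the quotient $N_G(H)/C_G(H)$ is isomorphic to a subgroup of $\Aut(H)$, as claimed. I do not anticipate a genuine obstacle here; the result is elementary. If anything warrants care, it is the bookkeeping that ensures $\varphi_g$ is well defined on $H$ (as opposed to acting only on $G$), which is exactly where the normalizer hypothesis $g\in N_G(H)$ is used and cannot be weakened. No deeper structural input about $G$ or $H$ is needed, so the argument is uniform and requires no case analysis.
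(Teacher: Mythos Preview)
Your argument is correct and is the standard textbook proof of the $N/C$ theorem. The paper does not supply its own proof of this proposition; it simply states the result with a citation to Huppert, so there is nothing further to compare.
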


A finite group $G$ is \textit{solvable} if it contains a finite sequence of (subnormal) subgroups
\[
1=N_0\leq N_1\leq\cdots\leq N_s=G
\]
 such that $N_{i-1}\lhd N_{i}$ and $N_i/N_{i-1}$ is cyclic
of prime order for all $1\leq i\leq s$. In general, if $N\lhd G$ is a normal subgroup
 of a group $G$ with $F:=G/N$, then we say that $G$
is an \textit{extension} of $N$ by $F$. In particular, if the quotient group $F$ is cyclic, then
the extension is \textit{cyclic}. The following theorem on cyclic extensions is important for solvable groups.

\begin{proposition}[\protect{\cite[Theorem 3.36]{Isaacs2008}}]\label{Cext}
Let $m$ be a positive integer, and $N$ a group with $a\in N$ and $\sigma\in\Aut(N)$.
If $a^{\sigma}=a$ and $x^{\sigma^m}=x^a$ for all $x\in N$, then there exists a group $G$, unique up
to isomorphism, and having $N$ as a normal subgroup with the following properties:
\begin{enumerate}[\rm(a)]
\item $G/N=\langle gN\rangle$ is cyclic of order $m$;
\item $g^m=a$;
\item $x^{\sigma}=x^g$.
\end{enumerate}
\end{proposition}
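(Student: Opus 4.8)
The plan is to prove existence by an explicit construction and then establish uniqueness through a normal-form argument. For \textbf{existence}, I would start from the semidirect product $\tilde G=N\rtimes\lg g\rg$, where $\lg g\rg$ is infinite cyclic and acts on $N$ so that $g^{-1}xg=\sigma(x)$ for all $x\in N$; such a group exists because $\sigma\in\Aut(N)$ determines a homomorphism $\ZZ\to\Aut(N)$. Inside $\tilde G$ the subgroup $N$ is normal with $\tilde G/N\cong\ZZ$, and the idea is that the desired $G$ is obtained by forcing the single relation $g^m=a$, that is, by factoring out the element $z:=g^{-m}a$.

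The key step is to show that $z=g^{-m}a$ generates a central, hence normal, subgroup $Z:=\lg z\rg$ of $\tilde G$, and this is precisely where the two hypotheses enter. Since $a^{\sigma}=a$ we have $g^{-1}ag=a$, so $g$ commutes with $g^{-m}a$. For $x\in N$, the relation $g^{-m}xg^{m}=\sigma^{m}(x)=x^{a}$, which is exactly the hypothesis $x^{\sigma^m}=x^a$, yields after a short conjugation computation $x^{-1}zx=z$; thus $z$ is centralized by all of $N$ and by $g$, hence by $\tilde G=\lg N,g\rg$. Setting $G:=\tilde G/Z$, I would then verify the required properties. The projection $\pi\colon\tilde G\to\tilde G/N\cong\ZZ$ sends $z$ to $-m$, so $Z\cap N=1$; consequently $N$ embeds as a normal subgroup of $G$, which we identify with $N$, and $G/N\cong\ZZ/m\ZZ$ is cyclic of order $m$ generated by $gN$, giving (a). The membership $z\in Z$ becomes $g^m=a$ in $G$, giving (b); and conjugation by $g$ still induces $\sigma$ on $N$, giving (c). Associativity is automatic here, since $G$ is a quotient of a genuine group.

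For \textbf{uniqueness}, suppose $G_1,G_2$ both satisfy (a)--(c) with distinguished elements $g_1,g_2$. Using (a), the cosets $N,g_iN,\dots,g_i^{\,m-1}N$ are distinct and exhaust $G_i/N$, so every element of $G_i$ has a unique normal form $xg_i^{\,j}$ with $x\in N$ and $0\le j<m$. A product of two such elements can be rewritten in normal form using only (c) to move $g_i$ past elements of $N$ (via $g_i^{\,j}y=\sigma^{-j}(y)g_i^{\,j}$) and (b) to reduce $g_i^{\,k}$ with $k\ge m$ (via $g_i^m=a$); the resulting multiplication rule depends only on the data $(N,\sigma,a,m)$ and not on $i$. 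Hence the map $\Phi\colon xg_1^{\,j}\mapsto xg_2^{\,j}$ is a well-defined bijection preserving multiplication, i.e.\ an isomorphism $G_1\to G_2$ restricting to the identity on $N$ and sending $g_1$ to $g_2$.

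The main obstacle I anticipate is computational rather than deep: verifying the centrality of $z=g^{-m}a$, where one must track carefully the conjugation action of $N$ on powers of $g$ and confirm that the hypotheses $a^{\sigma}=a$ and $x^{\sigma^m}=x^a$ are exactly the conditions needed, with no further compatibility required. A secondary point of care is the harmless identification of the embedded copy of $N$ inside $G$ with $N$ itself, so that $G$ literally contains $N$ as a normal subgroup as demanded by the statement.
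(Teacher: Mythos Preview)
Your argument is correct and is essentially the standard proof of this cyclic-extension theorem: construct the semidirect product $N\rtimes\ZZ$, check that $z=g^{-m}a$ is central using precisely the two hypotheses, and pass to the quotient; uniqueness via normal forms is likewise the textbook route. Note, however, that the paper does not give its own proof of this proposition at all---it is quoted as \cite[Theorem~3.36]{Isaacs2008} and used as a black box---so there is no in-paper argument to compare against. Your write-up matches the proof one finds in Isaacs' book, so in that sense it is ``the same approach as the source.''
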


Let $G$ be a finite permutation group acting on a finite set $\Omega$. If for any $\alpha,\beta\in\Omega$
there exists an element $g\in G$ such that $\beta=\alpha^g$, then $G$ is \textit{transitive} on $\Omega$.
Additionally, if such an element $g$ exists uniquely, then $G$ is \textit{regular} on $\Omega$.
A subset $\Delta\subseteq\Omega$ is a \textit{block} for $G$ if $\{\Delta^g|g\in G\}$ is
a partition of $\Omega$. A transitive permutation group $G$
on $\Omega$ is \textit{primitive} if the only blocks for $G$ are the singletons and the whole set $\Omega$,
and \textit{quasiprimitive} if  every nontrivial normal subgroup of $G$ is transitive.
It is well known that every primitive group is quasiprimitive.

\begin{proposition}\cite{Jones2010, Li2006}
Let $X$ be a  quasiprimitive permutation group of degree $n$. If $X$
contains a cyclic regular subgroup $H$. Then exactly one of the following holds:
\begin{enumerate}[\rm(a)]
  \item $\CC_p \cong H \leq X \leq \AGL(1, p)$, where $n=p$ is a prime.
  \item $X=\Alt_n$ where $n$ is odd, or $X=\Sym_n$, where $n \geq 4$;
  \item $\PGL(d, q) \leq X \leq \PGAL(d, q)$, $H$ is a Singer subgroup of $X$,
  and $n=(q^d-1)/(q-1)$.
  \item $X=\PGAL(2,8)$, $H=\langle s\sigma\rangle \cong \CC_9$,
  where $\langle s\rangle$ is a Singer subgroup of $X$ and $\sigma\in X \setminus \PSL(2,8)$ with $|\sigma|=3$.
  \item[\rm(e)] $(X, n)=(\PSL(2,11), 11),\,(\Mat_{11}, 11)$, or $(\Mat_{23}, 23)$.
\end{enumerate}
\end{proposition}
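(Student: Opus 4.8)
The plan is to combine the structure theory of quasiprimitive groups with the classification of finite simple groups (CFSG) and the known factorizations of almost simple groups. Write $N=\mathrm{soc}(X)$ for the socle; by quasiprimitivity $N$ is transitive, and since $H\cong\CC_n$ is regular we have $|H|=n=|\Omega|$ with $H$ abelian. The guiding observation is that each conclusion (a)--(e) is either an affine group of prime degree or a $2$-transitive group, so the whole argument is organised around showing that $X$ is forced into one of these two shapes. Note also that a point stabiliser $X_\alpha$ satisfies $X=HX_\alpha$ with $H\cap X_\alpha=1$, i.e. $X$ has an \emph{exact factorization with a cyclic factor}; this reformulation is what lets the factorization theory of almost simple groups be brought to bear.

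The first main step is a reduction to the primitive, indeed $2$-transitive, case. Since $H$ is a cyclic regular subgroup, $(\Omega,X)$ is a circulant-type configuration, and the Schur--Wielandt theory of Schur rings over $\CC_n$ (the theory of B-groups) applies: for a \emph{primitive} overgroup of $\CC_n$ one concludes that either $n=p$ is prime and $\CC_p\le X\le\AGL(1,p)$, giving conclusion (a), or $X$ is $2$-transitive. To cover the genuinely quasiprimitive (a priori possibly imprimitive) hypothesis one first shows, by analysing the cyclic action of $H$ on any $X$-invariant block system together with the transitivity of $N$, that no proper nontrivial block system can survive; hence $X$ is primitive and the B-group dichotomy applies. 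This reduces everything to classifying the $2$-transitive groups that contain a cyclic regular subgroup.

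The second main step invokes the CFSG-based classification of finite $2$-transitive groups and tests each family for a cyclic regular subgroup, using the maximal factorizations of Liebeck--Praeger--Saxl and their exact refinements to decide when a cyclic complement of the correct order exists. The alternating and symmetric groups in their natural action contribute an $n$-cycle, giving $\Sym_n$ for $n\ge4$ and $\Alt_n$ for odd $n$ (so that the $n$-cycle is even), which is conclusion (b). For the linear groups acting on projective space a cyclic regular subgroup is essentially a Singer cycle, which pins down $\PGL(d,q)\le X\le\PGAL(d,q)$ and the degree $n=(q^d-1)/(q-1)$ of conclusion (c). The remaining $2$-transitive actions are then checked individually: the unitary, symplectic, Suzuki and Ree families, and all but a few sporadic actions, are shown to admit no cyclic regular subgroup, while the surviving exceptions are exactly the extra cyclic regular subgroup $\langle s\sigma\rangle\cong\CC_9$ of $\PGAL(2,8)$ on the projective line over $\mathbb{F}_8$, giving (d), and the three actions of $\PSL(2,11)$, $\Mat_{11}$ and $\Mat_{23}$ of degrees $11,11,23$, giving (e).

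I expect the principal obstacle to lie in this second step, and within it in two places. The most delicate point is reading off from the factorization tables precisely when a complement is \emph{cyclic} of the right order, as opposed to merely abelian or merely of the right cardinality; in the linear case this requires identifying Singer cycles and controlling which field-automorphism extensions between $\PGL(d,q)$ and $\PGAL(d,q)$ retain a cyclic regular subgroup, the subtlety being exactly the non-Singer example in $\PGAL(2,8)$ that produces conclusion (d). The second delicate point is the explicit verification of the small exceptional groups in (d) and (e), which must be confirmed by direct examination of element orders and subgroup structure (for instance, checking that $\Mat_{11}$ and $\Mat_{23}$ have regular cyclic subgroups of orders $11$ and $23$ while the other Mathieu actions do not). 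By contrast, the reduction to the $2$-transitive case and the elimination of the imprimitive possibilities, though they require the Schur-ring machinery, are comparatively routine, so the weight of the proof rests on the CFSG-dependent analysis of $2$-transitive groups.
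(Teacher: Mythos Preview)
The paper does not supply a proof of this proposition; it is quoted verbatim with a citation to Jones and Li and then used as a black box in Corollary~\ref{primitive}. There is therefore no in-paper argument to compare your sketch against.

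That said, your outline is essentially the route taken in those references. Your reduction from quasiprimitive to primitive is correct, and the reason can be made sharper than you indicate: if $\mathcal{B}$ is a nontrivial block system with block size $d$, then the unique subgroup $C_d\le H$ stabilises \emph{every} block setwise (since $H$ is cyclic and acts regularly), hence lies in the kernel of the action on $\mathcal{B}$; quasiprimitivity forces that kernel to be trivial, so $d=1$. After that, Burnside's classical theorem (rather than general Schur--Wielandt theory) already gives the dichotomy ``$X\le\AGL(1,p)$ or $X$ is $2$-transitive'', and the remainder is exactly the CFSG-based case analysis you describe, with the factorization tables of Liebeck--Praeger--Saxl used to detect cyclic complements. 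The two points you single out as delicate --- separating genuinely cyclic complements from merely abelian ones in the linear case, which is precisely what isolates the exceptional $\PGAL(2,8)$ example in (d), and the direct verification of the sporadic cases in (e) --- are indeed where the substance of the cited proofs lies.
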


\begin{proposition}\cite[Theorem 1.5]{Li2006}
Let $X$ be a quasiprimitive permutation group on a finite set $\Omega$. If $X$
contains a dihedral regular subgroup $H$, then $X$ is 2-transitive,
and exactly one of the following holds for some $\omega \in \Omega$:
\begin{enumerate}[\rm(a)]
  \item $(X, H)=(\Alt_4, \DD_4), (\Sym_4, \DD_4), (\AGL(3,2), \DD_8),
  (\AGL(4,2), \DD_{16}), (\CC_2^4\rtimes \Alt_6, \DD_{16})$, or $(\CC_2^4\rtimes \Alt_7, \DD_{16})$.
  \item $(X, H, X_\omega)=(\Mat_{12}, \DD_{12}, \Mat_{11}),(\Mat_{22} .2, \DD_{22}, \PSL(3,4) .2)$,
  or $(\Mat_{24}, \DD_{24}, \Mat_{23})$.
  \item $(X, H, X_\omega)=(\Sym_{2 m}, \DD_{2 m}, \Sym_{2 m-1})$,
   or $(\Alt_{4 m}, \DD_{4 m}, \Alt_{4 m-1})$.
  \item $X=\PSL(2, p^e).O, H=\DD_{p^e+1}$, and
  $X_\omega \unrhd \CC_p^e \rtimes \CC_{\frac{p^e-1}{2}}.O$,
  where $p^e \equiv3\pmod 4$, and $O \leq \Out(\PSL(2, p^e)) \cong \CC_2 \times \CC_e$.
  \item $X=\PGL(2, p^e) \CC_f, H=\DD_{p^e+1}$, and $X_\omega=\CC_p^e \rtimes \CC_{p^e-1}$,
  where $p^e \equiv 1\pmod{4}$, and $f \mid e$.
\end{enumerate}
\end{proposition}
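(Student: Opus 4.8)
The plan is to prove this via the quasiprimitive version of the O'Nan--Scott theorem (Praeger), analysing the surviving structural types one at a time. Identify $\Omega$ with the regular subgroup $H\cong\DD_{2m}$, so that $|\Omega|=|H|=2m$ and, for any point stabilizer $X_\omega$, the regularity of $H$ yields an exact factorization $X=HX_\omega$ with $H\cap X_\omega=1$. The whole argument is then a classification of exact factorizations of quasiprimitive groups in which one factor is dihedral.

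First I would cut down the list of O'Nan--Scott types. Since $H$ is solvable and carries a cyclic subgroup $\lg a\rg$ of index $2$, its subgroup lattice is rigid: every cyclic subgroup of even order lies inside $\lg a\rg$, and each element outside $\lg a\rg$ is an involution. This rigidity is incompatible with product and diagonal structures. In the product-action, compound-diagonal, holomorph-compound and twisted-wreath types a transitive---hence any regular---subgroup projects onto transitive subgroups of a nontrivial direct power, which would force a direct decomposition of $H$ that $\DD_{2m}$ does not admit; in the simple-diagonal and holomorph-simple types the degree and socle action likewise cannot be matched by a dihedral regular subgroup. This leaves only the affine type and the almost simple type.

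In the affine case $X=V\rtimes X_0$ with $V\cong\CC_p^d$ regular and $X_0\le\GL(d,p)$, so $|H|=p^d=2m$ forces $p=2$ and $H\cong\DD_{2^d}$ (with $V$ itself the regular dihedral group). Here I would invoke the (CFSG-based) classification of the affine $2$-transitive groups and run through that finite list, testing which contain a regular copy of $\DD_{2^d}$. This yields exactly the affine entries of part~(a): $\AGL(2,2)\cong\Sym_4$ and $\Alt_4$ with $\DD_4$, $\AGL(3,2)$ with $\DD_8$, $\AGL(4,2)$ with $\DD_{16}$, and the two exceptional groups $\CC_2^4\rtimes\Alt_6$ and $\CC_2^4\rtimes\Alt_7$ with $\DD_{16}$; each of these is $2$-transitive, as required.

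The almost simple case is where the real work lies, and I expect it to be the main obstacle. Here $T\le X\le\Aut(T)$ for a nonabelian simple socle $T$, and the factorization $X=HX_\omega$ with $H$ dihedral must be fed into the machinery of factorizations of almost simple groups: the maximal factorizations of Liebeck--Praeger--Saxl together with the exact factorizations of Li--Wang--Xia. The first substep is to establish that $X$ is actually $2$-transitive---equivalently that $X_\omega$ is maximal and transitive on the remaining $2m-1$ points---which one reads off from the very restricted shape of a dihedral factor in those factorization tables. The second, more delicate, substep is to pin down both members of each surviving factorization and the isomorphism type of $X_\omega$. Sorting the natural actions of the symmetric and alternating groups---using that the regular representation of $\DD_{2k}$ lands inside $\Alt_{2k}$ exactly when $k$ is even, so that only degrees divisible by $4$ arise for the alternating groups---gives part~(c); the sporadic factorizations involving the Mathieu groups give part~(b); and the dihedral torus normalizers $\DD_{p^e+1}$ of $\PSL(2,p^e)$ and $\PGL(2,p^e)$ acting on the projective line give parts~(d) and~(e), with the congruence $p^e\equiv\pm1\pmod 4$ governing whether the regular dihedral subgroup already sits inside $\PSL$ or only inside $\PGL$, and thereby determining the precise point stabilizer. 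Assembling the affine and almost simple outcomes, and checking that the listed cases are mutually exclusive, completes the classification.
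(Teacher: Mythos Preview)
The paper does not prove this proposition at all: it is quoted in the preliminaries as Theorem~1.5 of \cite{Li2006} and used thereafter as a black box. There is therefore no ``paper's own proof'' to compare your proposal against.

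Your outline is the expected strategy---Praeger's quasiprimitive O'Nan--Scott reduction to the affine and almost simple types, followed by the Liebeck--Praeger--Saxl factorization tables in the almost simple case---and this is indeed how the result is established in the cited reference, but none of that argument appears in the present paper. One small slip worth flagging: in the affine case you write parenthetically that ``$V$ itself [is] the regular dihedral group'', but $V\cong\CC_2^{\,d}$ is elementary abelian and hence dihedral only when $d=2$; for $d\ge 3$ the regular dihedral subgroup $H\cong\DD_{2^d}$ is a different regular subgroup of $X$, not equal to the socle $V$.
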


\begin{corollary}\label{primitive}
Let $X$ be a quasiprimitive permutation group with a stabilizer $H$ and a regular subgroup $K$.
If $X$ is solvable, then one of the the following holds true:
\begin{enumerate}[\rm(a)]
  \item If $K$ is cyclic, then either $(X, H, K)=(\Sym_4,\DD_6,\CC_4,)$, or $\CC_p \cong K \leq X \leq \AGL(1, p)$
  for some prime $p$.
  \item If $K$ is dihedral and $H$ is either cyclic or dihedral, then
  $(X, H, K)=(\Alt_4,  \CC_3,\DD_4)$ or $(\Sym_4,  \DD_6,\DD_4)$.
\end{enumerate}
\end{corollary}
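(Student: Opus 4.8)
The plan is to read off Corollary~\ref{primitive} from the two preceding classifications of quasiprimitive groups with a cyclic, respectively a dihedral, regular subgroup, by running through their finite lists of possibilities and retaining only those in which $X$ is solvable. One bookkeeping point should be fixed at the outset: since $X$ is transitive with point stabilizer $H$ and $K$ is regular, we have $|K|=n$, $|X|=n\,|H|$ and $H\cap K=K_\omega=\{1\}$, so $|H|=|X|/|K|$; this quotient will pin down the isomorphism type of $H$ in each surviving case. Note also that in the quoted propositions the symbol $H$ denotes the \emph{regular} subgroup, which here is our $K$.

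For part~(a), $K$ is cyclic, and I would eliminate the five cases of the cyclic classification~\cite{Jones2010, Li2006} in turn. Its case~(a) is already the affine conclusion $\CC_p\cong K\leq X\leq\AGL(1,p)$, which is solvable. Its case~(b) gives $X=\Alt_n$ ($n$ odd) or $X=\Sym_n$ ($n\geq4$); imposing solvability leaves only $X=\Sym_4$, since $\Alt_n$ with $n$ odd is solvable just for $n=3$, where $\CC_3=\Alt_3\leq\AGL(1,3)$ is already covered by case~(a). Its case~(c) has $\PGL(d,q)\leq X\leq\PGAL(d,q)$ with $\PSL(d,q)$ forced solvable, hence $(d,q)\in\{(2,2),(2,3)\}$, i.e. $X=\Sym_3$ (degree $3$, again affine) or $X=\PGL(2,3)=\Sym_4$; cases~(d) and~(e) are non-solvable. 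In the one surviving exceptional configuration, $X=\Sym_4$ acting on $4$ points with $K\cong\CC_4$, we get $|H|=24/4=6$, and every subgroup of $\Sym_4$ of order $6$ is isomorphic to $\Sym_3\cong\DD_6$, so $(X,H,K)=(\Sym_4,\DD_6,\CC_4)$; together with the affine family this is exactly part~(a).

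For part~(b), $K$ is dihedral, and I would go through cases~(a)--(e) of the dihedral classification~\cite[Theorem~1.5]{Li2006}. Under solvability, the six candidates in case~(a) reduce to $X=\Alt_4$ or $X=\Sym_4$ (the other four contain the non-solvable composition factors $\GL(3,2)\cong\PSL(2,7)$, $\GL(4,2)\cong\Alt_8$, $\Alt_6$ or $\Alt_7$); case~(b) yields only Mathieu groups; case~(c) needs $\Sym_{2m}$ or $\Alt_{4m}$ solvable, forcing $X=\Sym_4$ (from $m=2$) or $X=\Alt_4$ (from $m=1$); case~(d) needs $\PSL(2,p^e)$ solvable with $p^e\equiv3\pmod{4}$, hence $p^e=3$ and, as $O\leq\Out(\PSL(2,3))\cong\CC_2$, $X=\PSL(2,3).O\in\{\Alt_4,\Sym_4\}$; case~(e) has $p^e\equiv1\pmod{4}$, so $p^e\geq5$ and $\PSL(2,p^e)$ is simple. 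Thus $X\in\{\Alt_4,\Sym_4\}$ with $K\cong\DD_4$, and $|H|=|X|/4$ gives $H\cong\CC_3$ when $X=\Alt_4$ and $H\cong\DD_6$ when $X=\Sym_4$; both are cyclic or dihedral, as the hypothesis requires, so the only triples are $(\Alt_4,\CC_3,\DD_4)$ and $(\Sym_4,\DD_6,\DD_4)$.

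I do not expect any substantive obstacle: the proof is a finite case check, and the only real care is in the bookkeeping. One must consistently exchange the roles of $H$ and $K$ relative to the cited propositions; one must recognize that the various surviving small configurations (the Singer action of $\PGL(2,3)$, the natural action of $\Sym_4$ on $4$ points, the degree-$4$ action of $\PSL(2,3).O$, the $m=2$ instance of $\Sym_{2m}$, and so on) all collapse to the action of $\Sym_4$ or $\Alt_4$ on four points, so that no triple is overlooked or double-counted; and one must discard the degenerate dihedral group $\DD_2\cong\CC_2$, which formally arises as the $m=1$ instance of case~(c) of \cite[Theorem~1.5]{Li2006} with $X=\CC_2$ and trivial stabilizer, as lying outside the scope of the statement.
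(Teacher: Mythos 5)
Your case-by-case filtering of the two cited classifications by solvability is correct, and the bookkeeping (swapping the roles of $H$ and $K$, computing $|H|=|X|/|K|$ in the surviving cases, and discarding the degenerate $\DD_2$ instance) all checks out. The paper states this corollary without proof, treating it as an immediate consequence of the two propositions, and your argument is exactly the routine verification being left implicit.
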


The following three results on exact products related to dihedral groups will be useful.

\begin{proposition}\cite[Theorem 1]{Monakhov1974}\label{solvable}
If a finite group $X$ admits a factorization $X=HK$ of two subgroups $H$ and $K$,
each having a cyclic subgroup of index at most $2$, then $X$ is solvable.
\end{proposition}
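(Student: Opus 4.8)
The plan is to argue by contradiction, taking a counterexample $X=HK$ of minimal order, where each of $H$ and $K$ has a cyclic subgroup of index at most $2$; call such a group \emph{near-cyclic}. A key elementary observation is that near-cyclic groups are closed under subgroups and quotients (if $C\le G$ is cyclic with $[G:C]\le2$, then $CN/N$ is cyclic of index at most $2$ in $G/N$, and $C\cap U$ is cyclic of index at most $2$ in any $U\le G$), and that every near-cyclic group is metacyclic, hence supersolvable and in particular solvable. I would first show the solvable radical $R(X)$ is trivial: otherwise $X/R(X)=(HR(X)/R(X))(KR(X)/R(X))$ is again a product of two near-cyclic groups of smaller order, so by minimality it is solvable, forcing $X$ to be solvable. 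Thus $R(X)=1$, every minimal normal subgroup of $X$ is nonabelian, and $\mathrm{soc}(X)$ is a direct product of nonabelian simple groups. Moreover both factors are core-free: the core $H_X\trianglelefteq X$ lies in $H$, hence is solvable, and a nontrivial solvable normal subgroup would contradict $R(X)=1$; likewise for $K$.

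Next I would reduce to a quasiprimitive action carrying a cyclic or dihedral regular subgroup, so that the classifications preceding Corollary~\ref{primitive} apply. Consider the faithful transitive action of $X$ on the coset space $[X:H]$; since $X=HK$, the factor $K$ acts transitively. Let $K_0\le K$ be cyclic of index at most $2$; then $K_0$ has at most $[K:K_0]\le2$ orbits, and when $K_0$ is itself transitive it acts as a faithful transitive cyclic group, hence regularly. Passing to the primitive quotient on the cosets of a maximal subgroup $M\supseteq H$ and analysing the core $M_X$ (which by minimality must be nonsolvable, whence $X$ is essentially almost simple with socle $T$), I would arrange a faithful quasiprimitive---indeed $2$-transitive---action in which a cyclic or a dihedral near-cyclic factor acts regularly. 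This lands us in the hypotheses of the quasiprimitive classification with a cyclic regular subgroup, or with a dihedral regular subgroup, and of Corollary~\ref{primitive}.

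Finally I would derive the contradiction by inspection of those lists. Since $X$ is nonsolvable, the solvable conclusions of Corollary~\ref{primitive} are excluded, so $X$ must be one of the finitely many nonsolvable groups in the two quasiprimitive classifications: an alternating or symmetric group, a group with $\PSL(d,q)\le X\le\PGAL(d,q)$, one of the listed groups with socle $\PSL(2,p^e)$, the example $\PGAL(2,8)$, or a Mathieu-type case with $H\cong\DD_{2\ell}$. For each, one checks that $X$ cannot be written as a product of two near-cyclic subgroups: a near-cyclic group of order $d$ has a cyclic subgroup of order at least $d/2$, so $|X|=|H|\,|K|/|H\cap K|\le4\,|H_0|\,|K_0|$ with $H_0,K_0$ cyclic, which is far too small against $|X|$ in every nonsolvable case (and the precise factorizations of these almost simple groups are in any event all known), contradicting $X=HK$. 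Hence no minimal counterexample exists and $X$ is solvable. The main obstacle is the middle step: cleanly manufacturing a \emph{faithful} quasiprimitive action in which one factor acts as a cyclic or dihedral regular subgroup, and in particular disposing of the near-cyclic factors of generalized quaternion, semidihedral, or modular type, which are neither cyclic nor dihedral and so are not directly covered by the stated classifications; these I would treat via their characteristic cyclic subgroup of index $2$ acting semiregularly, or by a direct order-and-structure argument showing they cannot occur as factors of a nonsolvable product.
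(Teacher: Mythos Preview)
The paper does not prove this proposition; it is simply quoted as Theorem~1 of Monakhov's 1974 paper, with no argument supplied. So there is no proof in the paper to compare your attempt against.

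As for your proposal on its own merits: you yourself identify the main gap, and it is a real one. The quasiprimitive classifications you want to invoke require the regular subgroup to be literally cyclic or dihedral, whereas a near-cyclic group can also be generalized quaternion, semidihedral, or a modular $2$-group, and you have not shown how to eliminate or reduce these. Your suggestion to use ``their characteristic cyclic subgroup of index~$2$ acting semiregularly'' is not developed: a semiregular index-$2$ cyclic subgroup of a transitive factor yields two orbits, not a regular action, so a further reduction is needed and you have not supplied it. The step from $R(X)=1$ to a single almost simple quasiprimitive action is also only sketched: the assertion that ``by minimality $M_X$ must be nonsolvable'' does not follow, since minimality applies to $X$, not to an arbitrary normal subgroup $M_X$, and $M_X$ need not itself factor as a product of near-cyclic groups. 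Finally, the closing order argument (``$|X|\le 4|H_0||K_0|$ is far too small'') is not a proof: for several groups on those lists one must actually enumerate the near-cyclic subgroups and check that no pair of them multiplies to the whole group, and the ``known factorizations of almost simple groups'' you invoke are themselves substantial CFSG-dependent results.

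It is also worth noting that Monakhov's 1974 argument predates both CFSG and the quasiprimitive classifications you rely on, so his proof is necessarily of a quite different, more elementary character.
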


\begin{proposition} \cite{DYL2023}\label{DM}
Let $X$ be an exact product of a dihedral subgroup $H=\langle x\rangle\rtimes\langle y\rangle\cong\DD_{2n}$
 and a cyclic subgroup $C=\langle z\rangle\cong\CC_m$, where $m,n\geq2$. Suppose that $M$ is
 the subgroup of maximal order in $X$ such that $\langle z\rangle\leq M\subseteq\langle x\rangle\langle z\rangle$.
 Then $M$, $M_X$ and $X/M_X$ fall into one of the following classes:
\end{proposition}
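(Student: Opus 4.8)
The plan is to first reduce the determination of $M$ to a question about the cyclic subgroup $A:=\lg x\rg\cong\CC_n$ of $H$, and then to invoke solvability together with the quasiprimitive classifications already recorded. Since $H$ has the cyclic subgroup $A$ of index $2$ and $C$ is itself cyclic, Proposition~\ref{solvable} applies, so $X$ is solvable; this is what will later let me use Corollary~\ref{primitive}. Next I would pin down the shape of $M$. Because $C\le M$ and every element of $M$ lies in $AC$, a direct application of the modular law gives $M=(M\cap A)C$; writing $D:=M\cap A$ one finds $D=\lg x^{d}\rg$ for a divisor $d\mid n$, $M\cap H=D$, $|M|=|D|\,m$, and hence $[X:M]=2d$. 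Moreover $C\le M$ forces $X=HC\le HM$, so $X=HM$ and $H$ is transitive on the right coset space $[X:M]$.

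The key dichotomy is whether the product set $AC=\lg x\rg\lg z\rg$ is itself a subgroup. Since $H\cap C=\{1_X\}$ we have $|AC|=nm=|X|/2$, and $AC\subseteq\lg A,C\rg$ forces $|\lg A,C\rg|\in\{nm,2nm\}$; thus $AC$ is a subgroup if and only if $\lg A,C\rg\neq X$, in which case $\lg A,C\rg=AC$ has index $2$ in $X$. This yields the \emph{generic branch}: if $AC$ is a subgroup then, by maximality, $M=AC$ is an index-$2$ (hence normal) subgroup of $X$, so $M_X=M$ and $X/M_X\cong\CC_2$. This is the principal class.

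In the \emph{exceptional branch} one has $\lg A,C\rg=X$, so $AC$ is not a subgroup and $D=M\cap A$ is a proper subgroup of $A$. Here I would study the faithful action of $\overline X:=X/M_X$ on $[X:M]$ alongside the companion action of $X$ on $[X:H]$ of degree $m$, in which the image $\overline C$ of $C$ is a cyclic regular subgroup and $\overline H$ is the point stabilizer. The aim is to show that, after passing to the induced action on a suitable block system, $\overline X$ acts quasiprimitively; since $X$ (hence $\overline X$) is solvable and carries a cyclic regular subgroup with point stabilizer $\overline H$, Corollary~\ref{primitive}(a) then leaves only $(\overline X,\overline H,\overline C)=(\Sym_4,\DD_6,\CC_4)$ or $\CC_p\cong\overline C\le\overline X\le\AGL(1,p)$ with $m=p$ prime. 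The dihedral regular configurations $(\Alt_4,\CC_3,\DD_4)$ and $(\Sym_4,\DD_6,\DD_4)$ of Corollary~\ref{primitive}(b) arise symmetrically from the dual action on $[X:C]$, in which $H$ is the (dihedral) regular subgroup and $C$ the stabilizer, and must be cross-checked. For each surviving configuration I would reconstruct $M$, the core $M_X$, and the quotient $X/M_X$ explicitly, reading $D=M\cap A$ off the position of $A$ inside $\overline X$, and so record the finitely many resulting rows.

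The main obstacle is precisely the passage to quasiprimitivity in the exceptional branch: a priori $M$ need not be maximal in $X$, so the coset action need not be primitive, and I must exclude a nontrivial intransitive normal subgroup of $\overline X$. I expect to handle this using the rigidity of the data---$A$ and $C$ cyclic, $H$ dihedral, and $M$ of \emph{maximal} order among subgroups of $AC$ containing $C$---to force any minimal normal subgroup of $\overline X$ either into the point stabilizer (contradicting corelessness after the reduction) or to act transitively, so that Corollary~\ref{primitive} applies to the quotient. The remaining work is bookkeeping: translating each quasiprimitive outcome back into the triple $(M,M_X,X/M_X)$, with the affine family $\overline X\le\AGL(1,p)$ needing the most care, since there $M_X$ is the relevant normal $p$-subgroup and $X/M_X$ embeds into $\AGL(1,p)$.
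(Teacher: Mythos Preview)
The paper does not prove Proposition~\ref{DM}; it is quoted as a preliminary result from~\cite{DYL2023}, so there is no ``paper's own proof'' to compare against. What follows is an assessment of your sketch on its own merits.

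Your generic branch is fine: if $\lg x\rg\lg z\rg$ is a subgroup, maximality forces $M=\lg x\rg\lg z\rg$, which has index~$2$ and is therefore normal, giving row~1 of Table~\ref{TAB}. The exceptional branch, however, is only a plan, and it has a real gap. You propose to reduce to a quasiprimitive action and then read off the answer from Corollary~\ref{primitive}, but row~2 of the table has $X/M_X\cong\DD_8$, and $\DD_8$ in its faithful action of degree~$4$ is \emph{not} quasiprimitive (its centre is a nontrivial intransitive normal subgroup). So this row cannot be obtained by a single application of Corollary~\ref{primitive}; it must arise from a two-step analysis (pass to a maximal subgroup $L$ with $M<L<X$, analyse $X/L_X$, then analyse $L/M_L$), and your sketch does not say how the pieces are reassembled into the stated $(M,M_X,X/M_X)$.

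A second, related issue is the handling of the affine branch $\CC_p\le\overline X\le\AGL(1,p)$. You flag it as ``needing the most care'' but do not explain how it terminates: in fact this branch does not by itself produce any of rows~2--5, and one needs an argument that after the affine step the residual problem either collapses into row~1 or feeds into one of the $\Alt_4/\Sym_4$ configurations. Finally, your invocation of the ``dual action on $[X:C]$'' to capture the dihedral-regular cases of Corollary~\ref{primitive}(b) is not quite right as stated: in that action $H$ is regular of degree $2n$, so the configurations $(\Alt_4,\CC_3,\DD_4)$ and $(\Sym_4,\DD_6,\DD_4)$ only match specific small values of $n$, and you have not tied the choice of action to the parameter $d$ in $M\cap\lg x\rg=\lg x^d\rg$. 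In short, the architecture is reasonable, but the exceptional branch needs a genuine inductive or two-stage argument rather than a single quasiprimitive reduction.
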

\begin{center}
\begin{threeparttable}[b]
\caption{Structure of $M$}\label{TAB}
\begin{tabular*}{70mm}[c]{|p{10mm}|p{15mm}|p{15mm}|p{15mm}|}
\toprule
 Case &$M$ & $M_X$  & $X/M_X$\\
  \hline
   1 & $\lg x\rg\lg z\rg$   & $\lg x\rg\lg z\rg$     &   $\CC_2$ \\
   2 &$\lg x^2\rg \lg z\rg$ & $\lg x^2\rg\lg z^2\rg$ &   $\DD_8$   \\
   3 &$\lg x^2\rg \lg z\rg$ & $\lg x^2\rg\lg z^3\rg$ &   $\Alt_4$   \\
   4 &$\lg x^4\rg \lg z\rg$ & $\lg x^4\rg\lg z^3\rg$ &   $\Sym_4$   \\
   5 &$\lg x^3\rg \lg z\rg$ & $\lg x^3\rg\lg z^4\rg$ &   $\Sym_4$   \\
\bottomrule
\end{tabular*}
\end{threeparttable}
\end{center}

\begin{proposition}\cite{DYL2023,ZD2016}\label{ac}
Let $X$ be an exact product of a dihedral subgroup $H=\lg x\rg\rtimes\lg y\rg\cong\DD_{2n}$ and a
 cyclic subgroup $\lg z\rg\cong\CC_m$, where $m,n\geq2$.
 If $n$ is odd, $\lg z\rg_X=1$ and $\lg x\rg\lg z\rg$ is a subgroup of $X$,
then $H\lhd X$.
\end{proposition}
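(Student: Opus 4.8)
The plan is to reduce the assertion to a numerical condition on the commutator subgroup and then to settle that condition with the help of the solvability of $X$ and the classification of solvable quasiprimitive groups containing a regular dihedral or cyclic subgroup.

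\textbf{Reduction.} First I would put $M:=\langle x\rangle\langle z\rangle$. Since $X=HK$ is exact, $\langle x\rangle\cap\langle z\rangle\le H\cap K=1$, so $M$ is a subgroup of order $mn$, hence of index $2$, and $M\lhd X$; also $y\notin M$, because $y=x^iz^j$ would force $z^j=x^{-i}y\in H\cap\langle z\rangle=1$ and then $y\in\langle x\rangle$, impossible as $n$ is odd. Thus $X=M\rtimes\langle y\rangle$ and $M\cap H=\langle x\rangle$. Because $n$ is odd, $[x,y]=x^{-2}$ generates $\langle x\rangle$, so $\langle x\rangle\le X'$, while $X/M\cong\CC_2$ gives $X'\le M$; Dedekind's law then yields $X'=\langle x\rangle\,(X'\cap\langle z\rangle)$. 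Next I would show that $H\lhd X$ is equivalent to $X'=\langle x\rangle$: if $H\lhd X$ then $\langle x\rangle$, being the unique cyclic subgroup of index $2$ of $H$, is characteristic in $H$ and hence normal in $X$, and $X/\langle x\rangle$ is a central extension of the cyclic group $X/H$ by $H/\langle x\rangle\cong\CC_2$, hence abelian, which forces $X'=\langle x\rangle$; conversely $X'=\langle x\rangle$ makes $X/\langle x\rangle$ abelian, so $H/\langle x\rangle\lhd X/\langle x\rangle$ and $H\lhd X$. Combining with the identity above, $H\lhd X$ holds if and only if
\[
X'\cap\langle z\rangle=1 ,
\]
and this is what it suffices to prove.

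\textbf{Main argument.} By Proposition~\ref{solvable}, $X$ is solvable, as $H\cong\DD_{2n}$ and $\langle z\rangle\cong\CC_m$ each have a cyclic subgroup of index at most $2$. Since $\langle z\rangle_X=1$, the action of $X$ on $\Omega:=X/\langle z\rangle$ is faithful, transitive of degree $2n$, with $H$ regular and point stabilizer $\langle z\rangle$, while $M$ induces a system of two blocks of size $n$ on which $M$ acts transitively with $\langle x\rangle$ regular. I would argue by induction on $|X|$. If the action on $\Omega$ is quasiprimitive, then $X$ is a solvable quasiprimitive group with regular dihedral subgroup $H$ and cyclic stabilizer $\langle z\rangle$, so Corollary~\ref{primitive}(b) forces $(X,\langle z\rangle,H)=(\Alt_4,\CC_3,\DD_4)$ or $(\Sym_4,\DD_6,\DD_4)$ — impossible since $H\cong\DD_{2n}$ with $n\ge3$ odd (and the second stabilizer is not cyclic). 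Hence $X$ is not quasiprimitive on $\Omega$, so it has an intransitive minimal normal subgroup $N$, which is an elementary abelian $p$-group. Since $N\cap H\lhd H$ and $H$ is non-abelian, $N\cap H\le\langle x\rangle$; using that no nontrivial normal subgroup of $X$ lies in $\langle z\rangle$, one reduces to the case $N\le\langle x\rangle$ (the residual possibility $N\cong\CC_2$ with $X\cong\CC_2\times M$ being handled directly as the corresponding statement for the bicyclic group $M$). Then $X/N$ is again an exact product of $\DD_{2n/p}$ and $\CC_m$ with $n/p$ odd, the image of $\langle z\rangle$ is core-free in $X/N$ (a normal subgroup of $X/N$ inside it pulls back to a normal subgroup of $X$ lying, modulo $N\le\langle x\rangle$, inside $\langle z\rangle$, hence trivial), and $\langle xN\rangle\langle zN\rangle$ is still a subgroup; by induction $(X/N)'=\langle xN\rangle$, and since $N\le\langle x\rangle$ this lifts to $X'=\langle x\rangle$.

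\textbf{Main obstacle.} The delicate point is the inductive step in the imprimitive case: since $\langle z\rangle$ is core-free, one cannot quotient by any part of $\langle z\rangle$, so every reduction must proceed along a normal subgroup contained in $\langle x\rangle$ (or by restricting $M$ to a block and invoking Corollary~\ref{primitive}(a) for the regular cyclic subgroup $\langle x\rangle$ there), and one must check that each reduction preserves all three hypotheses ($n$ odd, $\langle z\rangle$ core-free, $\langle x\rangle\langle z\rangle$ a subgroup) while ensuring that no exceptional quasiprimitive quotient survives. Pinning down the few small configurations — chiefly the one where the only intransitive minimal normal subgroup is a central $\CC_2$ — and verifying $X'\cap\langle z\rangle=1$ there by hand is where the real work lies.
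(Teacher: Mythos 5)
A preliminary remark: the paper does not prove Proposition~\ref{ac} at all --- it is quoted from \cite{DYL2023,ZD2016} --- so there is no in-paper argument to compare yours with; what follows assesses your proof on its own terms.

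Your reduction is correct and clean: $M=\langle x\rangle\langle z\rangle$ has index $2$, $\langle x\rangle\le X'\le M$, Dedekind gives $X'=\langle x\rangle(X'\cap\langle z\rangle)$, and $H\lhd X$ is indeed equivalent to $X'\cap\langle z\rangle=1$; the quasiprimitive case is also disposed of correctly via Corollary~\ref{primitive}(b). The genuine gap is the imprimitive step. From $N\cap H\lhd H$ you only get $N\cap H\le\langle x\rangle$, which does not place $N$ inside $\langle x\rangle$, and the dichotomy ``$N\le\langle x\rangle$ or $N$ is a central $\CC_2$'' on which your induction rests is false. Concretely, let $X=(\langle x\rangle\times\langle z\rangle)\rtimes\langle y\rangle$ of order $18$ with $x^3=z^3=y^2=1$, $x^y=x^{-1}$, $z^y=xz$ (this is an order-$2$ automorphism of $\CC_3\times\CC_3$). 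Then $H=\langle x,y\rangle\cong\DD_6$ and $K=\langle z\rangle\cong\CC_3$ give an exact product with $n=3$ odd, $\langle x\rangle\langle z\rangle$ a subgroup and $\langle z\rangle_X=1$, so all hypotheses hold; yet $\langle xz^2\rangle$ is a central, intransitive, minimal normal subgroup of order $3$ that is neither contained in $\langle x\rangle$ nor a $\CC_2$.

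The same example defeats your justification that the quotient inherits the hypotheses. A normal subgroup of $X/N$ contained in $\langle\bar z\rangle$ pulls back to a normal subgroup of $X$ contained in $N\langle z\rangle$ and containing $N$ --- not to one contained in $\langle z\rangle$ --- so you cannot conclude it is trivial; indeed, in the group above, quotienting by the other intransitive minimal normal subgroup $N=\langle x\rangle$ gives $X/N\cong\CC_6$, in which $\langle\bar z\rangle$ \emph{is} normal, and moreover $H/N\cong\CC_2$ is no longer dihedral, so the induction hypothesis simply does not apply to $X/N$. (The residual case of a central $\CC_2$ outside $M$ is likewise only gestured at: there is no ``corresponding statement for the bicyclic group $M$'' available to invoke, and $M'\cap\langle z\rangle=1$ would still need a proof there.) So the inductive framework does not close as written. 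To repair it you would need either a reduction along a quantity for which all three hypotheses demonstrably persist, or a direct argument controlling the action of $z$ on $\langle x\rangle$ inside the normal subgroup $M$ --- your correct reduction to $X'\cap\langle z\rangle=1$ is a good starting point, but it is not yet the hard part of the proof.
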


\section{Characterizations}\label{sec:lems}
In this section, we give some structural characterizations of the exact products
of two non-abelian dihedral groups. To simplify the statements,
in the remainder of this paper, we use notation $X$ to denote an exact product
of two dihedral subgroups $H=\lg x\rg\rtimes\lg y\rg\cong\DD_{2n}$ and
$K=\lg z\rg\rtimes\lg w\rg\cong\DD_{2m}$, where $m,n\geq 3$.

\begin{lemma}\label{c1}
If $X$ has a maximal subgroup $G$ such that $H\leq G$ and $G\cap K\leq \lg z\rg$,
then
\begin{enumerate}[\rm(a)]
\item either $G=G_X=H\langle z\rangle$, $X=(H\lg z\rg)\rtimes\lg w\rg$, $X/G_X\cong\CC_2$, or
\item $G=H\langle z^2\rangle$, $G_X=\langle x^3,z^2\rangle$, $X=( K\lg x\rg)\rtimes\lg y\rg$ and
$X/G_X\cong\Sym_4$. In particular, $3|n$ and $2|m$.
\end{enumerate}
\end{lemma}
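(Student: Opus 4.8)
The plan is to locate $G$ precisely with the modular law, split on the index $[X:G]$, and reduce the hard case to a primitive permutation action to which Corollary~\ref{primitive} applies. First, since $H\le G\le X=HK$, Dedekind's modular law gives $G=H(G\cap K)$; as $G\cap K\le\lg z\rg$ is cyclic, $G\cap K=\lg z^d\rg$ for the divisor $d\di m$ with $[\lg z\rg:\lg z^d\rg]=d$, and because $H\cap K=1$ this yields $|G|=2nm/d$ and $[X:G]=2d$. I would also record that $w\notin G$ (otherwise $w\in G\cap K\le\lg z\rg$, impossible) and that $K$ is transitive on the coset space $\Omega:=\{Gt:t\in X\}$ of degree $2d$, with the stabiliser of $\omega:=G$ equal to $\lg z^d\rg$. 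If $d=1$ then $[X:G]=2$, so $G\lhd X$, $G=G_X$, $X/G_X\cong\CC_2$, and since $w\notin G$ we get $X=G\lg w\rg=(H\lg z\rg)\rtimes\lg w\rg$, which is conclusion (a). For the rest assume $d\ge2$; the target is (b).

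Since $H$ and $K$ each contain a cyclic subgroup of index $2$, Proposition~\ref{solvable} shows $X$ is solvable, and since $G$ is maximal $\ox:=X/G_X$ acts faithfully and primitively on $\Omega$ of degree $2d\ge4$. Let $\oh:=HG_X/G_X$ and $\ok:=KG_X/G_X$. Writing $K\cap G_X=\lg z^e\rg$ (a normal subgroup of $K$ inside $\lg z\rg$, with $d\di e\di m$), we have $\ok\cong\DD_{2e}$, transitive on $\Omega$ with cyclic stabiliser $\lg z^d\rg/\lg z^e\rg$. The crucial step is to prove $e=d$, i.e. $\lg z^d\rg\lhd X$. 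I would argue through the normal closure $N:=\lg z^d\rg^X$: if $N\le G$ then $N\le G_X$ (being normal and contained in $G$), so $\lg z^d\rg\le G_X$ and hence $K\cap G_X=\lg z^d\rg$; while if $N\not\le G$ then $NG=X$ by maximality, and since $\lg z^d\rg\le G\cap N$ one gets $G=H(G\cap N)$, so $X/N\cong H/(H\cap N)$ is a quotient of $\DD_{2n}$ — a small cyclic or dihedral group — which must then be pushed to a contradiction with $d\ge2$. Granting $e=d$: as $\lg z^d\rg\le G_X$ we get $HG_X=H\lg z^d\rg=G$, so $\ox_\omega=G/G_X\cong H/(H\cap G_X)$ is a quotient of $\DD_{2n}$, hence cyclic or dihedral, while $\ok\cong\DD_{2d}$ acts regularly.

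Now Corollary~\ref{primitive}(b) applies to $\ox$ with regular dihedral subgroup $\ok$ and cyclic-or-dihedral stabiliser $\ox_\omega$, forcing $(\ox,\ox_\omega,\ok)=(\Alt_4,\CC_3,\DD_4)$ or $(\Sym_4,\DD_6,\DD_4)$. The first is impossible, since $\ox_\omega\cong H/(H\cap G_X)$ cannot be $\cong\CC_3$ (dihedral groups have no quotient isomorphic to $\CC_3$; equivalently $\ox_\omega\cong\CC_3$ would force $\oh=1$, hence $H\le G_X$ and $G=H\lg z^d\rg\le G_X\le G$, so $[X:G]=[X:G_X]=12\ne2d$). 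Hence $\ox\cong\Sym_4$, $|\Omega|=4$, $d=2$. It remains to reassemble (b): from $\oh\cong\DD_6$ we get $|H\cap G_X|=n/3$, so $3\di n$ and $H\cap G_X=\lg x^3\rg$; from $\ok\cong\DD_4$ of order $4=[K:\lg z^2\rg]$ we get $2\di m$ and $K\cap G_X=\lg z^2\rg$; comparing orders gives $G_X=\lg x^3,z^2\rg$. Pulling back $\Alt_4\lhd\Sym_4$ produces an index-$2$ normal subgroup of $X$ containing $G_X$, $K$ (as $\ok\le\Alt_4$) and $x$ (its image lying in the $\CC_3=\Sym_3\cap\Alt_4$), hence containing $K\lg x\rg$; an order count identifies this subgroup with $K\lg x\rg$, and $y\notin K\lg x\rg$ because its image in $\Sym_4$ is a transposition. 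Therefore $X=(K\lg x\rg)\rtimes\lg y\rg$, which is conclusion (b).

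The main obstacle is the crucial step of the second paragraph: proving $\lg z^d\rg\lhd X$ when $d\ge2$ — equivalently, that $\ok$ acts regularly and $\ox_\omega$ is cyclic or dihedral — since once this holds everything afterwards is a direct appeal to Corollary~\ref{primitive} plus routine bookkeeping (in particular it then follows automatically that $\ox$ has degree $4$ and $d=2$). I expect the delicate point to be the case $N=\lg z^d\rg^X\not\le G$, where $X/N\cong H/(H\cap N)$ is cyclic or dihedral and one must derive a contradiction by combining this with the quasiprimitivity of $\ox$ (every nontrivial normal subgroup of $\ox$ is transitive on $\Omega$) and with the fact — forced by the hypothesis $G\cap K\le\lg z\rg$ — that $G$ contains no reflection of $K$; this is precisely the place where the specific shape of the hypotheses is genuinely needed.
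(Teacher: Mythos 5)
Your overall strategy coincides with the paper's: apply Dedekind's modular law to get $G=H\lg z^d\rg$, dispose of $d=1$ as case (a), and for $d\ge 2$ pass to the faithful primitive action of $\ox=X/G_X$ on the cosets of $\og$ and invoke Corollary~\ref{primitive}. The endgame (ruling out $(\Alt_4,\CC_3,\DD_4)$ because $\oh$ is a quotient of a dihedral group, then reassembling $G_X=\lg x^3,z^2\rg$ and $X=(K\lg x\rg)\rtimes\lg y\rg$ by pulling back $\Alt_4$) is sound.

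However, the step you single out as the ``main obstacle'' --- proving $\lg z^d\rg\le G_X$, equivalently that $\ok$ is regular and $\ox_\omega=\oh$ is a quotient of $H$ --- is left genuinely open in your write-up: in the branch $N=\lg z^d\rg^X\not\le G$ you only assert that a contradiction ``must be pushed through,'' and your sketch of how to do it ($X/N$ is a quotient of $\DD_{2n}$, combine with quasiprimitivity, \dots) is not carried out. As written, the proof is incomplete at exactly the point on which everything else depends. The gap is real but closes in one line, and the normal-closure detour is unnecessary: $\lg z^d\rg$ is normal in $K$ (every subgroup of $\lg z\rg$ is inverted, hence normalized, by $w$), and since $X=HK$ with $H\le G$ we have $G^{hk}=(G^h)^k=G^k$, so
\[
G_X=\bigcap_{g\in X}G^{g}=\bigcap_{k\in K}G^{k}\ \ge\ \bigcap_{k\in K}\lg z^d\rg^{k}=\lg z^d\rg .
\]
This is precisely the computation the paper performs (displayed as $\lg z_1\rg\le\bigcap_{k,l}G^{z^kw^l}=G_X$). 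With $\lg z^d\rg\le G_X$ in hand you immediately get $K\cap G_X=\lg z^d\rg$, $G=HG_X$, $\og=\oh$, and $\ok\cong K/\lg z^d\rg$ regular of degree $2d$, after which your appeal to Corollary~\ref{primitive} and the subsequent bookkeeping go through as you describe. I recommend replacing the entire second-paragraph discussion of $N$ by this observation.
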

\begin{proof}
By hypothesis, we may assume $G\cap K=\lg z_1\rg$ for some $z_1\in\lg z\rg$.
Using Dedekind's modular law, we have $G=G\cap X=G\cap HK=H(G\cap K)=H\lg z_1\rg$.
Since
 \[
 \langle z_1\rangle\leq\bigcap_{k,l\in\ZZ} G^{z^kw^l}=\bigcap_{i,j,k,l\in\ZZ} G^{x^iy^jz^kw^l}=G_X,
 \]
we have  $G_X=G_X\cap G=G_X\cap H\langle z_1\rangle=(G_X\cap H)\lg z_1\rg$ and $G=HG_X$.

Consider now the quotient group $\ox:=X/G_X$. We have $\og=G/G_X=HG_X/G_X=\oh$
and $\ox=\oh\ok$, where $\ok=KG_X/G_X\cong K/\langle z_1\rangle.$
Since $ G$ is a maximal subgroup of $X$ and $\og_{\ox}=\overline{G_X}=1$,
the (faithful) transitive permutation representation of $\ox$ on the cosets $[\ox:\og]$
gives rise to a permutation group with a stabilizer $\og=\oh$ and a regular subgroup $\ok$.
Being homomorphic images of the dihedral subgroups $H$ and $K$, both $\oh$ and
$\ok$ are either dihedral or cyclic. Note that in the latter case, we have $|\oh|=1$ or $2$, and $|\ok|=2$.
By checking Corollary~\ref{primitive}, only two possible cases may occur: either
$\CC_p\cong\ok\leq\ox\leq \AGL(1, p)$
with $p=2$, or $(\ox,\oh,\ok)=(\Sym_4,\DD_6,\DD_4)$.
In the former case, we have $z_1=z$, and hence (a) holds true. In the latter case, we have $z_1=z^2$,
which gives (b), as required.
\end{proof}
\vskip 3mm

By reversing the roles of $H$ and $K$ in the above lemma, we obtain the following result.
\begin{lemma}\label{a1}
If $X$ has a maximal subgroup $G$ such that $K\leq G$ and $G\cap H\leq \lg x\rg$,
then
\begin{enumerate}[\rm(a)]
\item either $G=G_X=K\langle x\rangle$, $X=(K\lg x\rg)\rtimes\lg y\rg$, $X/G_X\cong\CC_2$, or
\item $G=K\langle x^2\rangle$, $G_X=\langle z^3,x^2\rangle$, $X=( H\lg z\rg)\rtimes\lg w\rg$ and
$X/G_X\cong\Sym_4$. In particular, $3|m$ and $2|n$.
\end{enumerate}
\end{lemma}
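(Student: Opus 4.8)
The plan is to repeat the proof of Lemma~\ref{c1} with the roles of the two dihedral factors interchanged, that is, swapping $H\leftrightarrow K$, $x\leftrightarrow z$, $y\leftrightarrow w$ and $n\leftrightarrow m$ throughout. I sketch the steps.

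Since $G\cap H\leq\langle x\rangle$, write $G\cap H=\langle x_1\rangle$ for some $x_1\in\langle x\rangle$. From $K\leq G$ and Dedekind's modular law one gets $G=G\cap X=G\cap KH=K(G\cap H)=K\langle x_1\rangle$; moreover $G$ is invariant under conjugation by any power of $z$ or $w$, while $\langle x_1\rangle$ is normalized by $x$ and inverted by $y$, so $\langle x_1\rangle$ lies in every conjugate $G^{x^iy^jz^kw^l}$ and hence $\langle x_1\rangle\leq G_X$. Therefore $G_X=G_X\cap G=(G_X\cap K)\langle x_1\rangle$ and $G=KG_X$. Passing to $\ox:=X/G_X$, we obtain $\og=KG_X/G_X=\ok$ and $\ox=\ok\,\oh$ with $\oh=HG_X/G_X\cong H/\langle x_1\rangle$, and a short computation (using $H\cap K=1$ and the form of $G_X$) gives $\oh\cap\ok=1$. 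Since $G$ is a maximal subgroup of $X$ with core $G_X$, the action of $\ox$ on the coset space $[\ox:\og]$ is faithful and primitive, hence quasiprimitive, with point stabilizer $\og=\ok$ and regular subgroup $\oh$ (in particular $\oh\neq1$). Being homomorphic images of dihedral groups, $\ok$ and $\oh$ are each dihedral or cyclic, a cyclic one having order at most $2$.

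The decisive step is to feed this into Corollary~\ref{primitive}, taking $\ok$ as the stabilizer and $\oh$ as the regular subgroup. The constraints just recorded rule out all but two possibilities: either $\CC_2\cong\oh\leq\ox\leq\AGL(1,2)$, forcing $\ox\cong\CC_2$; or $(\ox,\ok,\oh)=(\Sym_4,\DD_6,\DD_4)$ (the options $(\Sym_4,\DD_6,\CC_4)$ and $(\Alt_4,\CC_3,\DD_4)$ being excluded because $|\oh|\leq2$ when $\oh$ is cyclic, and because a dihedral group of order at least $6$ has no $\CC_3$ quotient). In the first case $H/\langle x_1\rangle\cong\CC_2$ forces $\langle x_1\rangle=\langle x\rangle$, so $G=G_X=K\langle x\rangle$; as $y\notin G$ and $y^2=1$ this yields $X=(K\langle x\rangle)\rtimes\langle y\rangle$ with $X/G_X\cong\CC_2$, which is~(a). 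In the second case $\oh\cong\DD_4$ and $\ok\cong\DD_6$ pin down $x_1=x^2$ (so $2\mid n$) and $G_X\cap K=\langle z^3\rangle$, the latter being the unique normal subgroup of $K\cong\DD_{2m}$ with quotient $\DD_6$ (so $3\mid m$); hence $G=K\langle x^2\rangle$, $G_X=(G_X\cap K)\langle x_1\rangle=\langle z^3,x^2\rangle$ and $X/G_X\cong\Sym_4$, which is~(b).

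I expect the only point requiring genuine work, beyond the mechanical translation, to be the final identification $X=(H\langle z\rangle)\rtimes\langle w\rangle$ in case~(b): one must verify that $H\langle z\rangle$ is in fact a subgroup of $X$ of index $2$ and that $\langle w\rangle$ is a complement to it. Since this mirrors the identification $X=(K\langle x\rangle)\rtimes\langle y\rangle$ already established in Lemma~\ref{c1}(b), it can be imported from there via the same $H\leftrightarrow K$ symmetry, or else checked directly from the coset structure of the normal subgroup $G_X$ inside $\ox\cong\Sym_4$. I anticipate no other obstacle.
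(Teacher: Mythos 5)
Your proposal is correct and coincides with the paper's own treatment: the paper proves Lemma~\ref{a1} simply by invoking the $H\leftrightarrow K$, $x\leftrightarrow z$, $y\leftrightarrow w$, $n\leftrightarrow m$ symmetry of Lemma~\ref{c1}, which is exactly the reversal you carry out (with the same Dedekind/core argument, the same appeal to Corollary~\ref{primitive}, and the same two surviving cases). The extra details you supply, such as excluding $(\Sym_4,\DD_6,\CC_4)$ and $(\Alt_4,\CC_3,\DD_4)$ and identifying $X=(H\lg z\rg)\rtimes\lg w\rg$ in case (b), are accurate but not a different method.
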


\begin{lemma}\label{m-n-odd}
If both $m$ and $n$ are odd, then $X=(H\lg z\rg)\rtimes\lg w\rg=(\lg x\rg K)\rtimes\lg y\rg$.
\end{lemma}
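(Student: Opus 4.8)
The plan is to determine the abelianisation $X/X'$ and read both claimed factorisations off it. Since $n$ is odd, $[x,y]=x^{-2}$ generates $\langle x\rangle$, so $H'=\langle x\rangle$; likewise $K'=\langle z\rangle$. Hence $\langle x\rangle,\langle z\rangle\le X'$, so the images of $x$ and $z$ in $X/X'$ are trivial and $X/X'$ is generated by the images $\bar y,\bar w$ of the involutions $y,w$. Being abelian, $2$-generated and of exponent dividing $2$, $X/X'$ is a quotient of $\CC_2\times\CC_2$, and it is nontrivial since $X$ is solvable by Proposition~\ref{solvable} and $X\ne1$; thus $|X/X'|\in\{2,4\}$. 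I would then prove $|X/X'|=4$. Granting this, $X/X'\cong\CC_2\times\CC_2$ with $\{\bar y,\bar w\}$ a basis, and the kernel of the epimorphism $X\to X/X'\to\langle\bar w\rangle$ that kills $\bar y$ is a normal subgroup of index $2$ containing $H=\langle x,y\rangle$ and $\langle z\rangle$; comparing orders, this kernel is exactly $H\langle z\rangle$, and $\langle w\rangle$ is a complement, giving $X=(H\langle z\rangle)\rtimes\langle w\rangle$. Symmetrically, killing $\bar w$ yields $X=(\langle x\rangle K)\rtimes\langle y\rangle$.

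So everything reduces to ruling out $|X/X'|=2$. A natural first try is to pick a maximal subgroup $G\ge H$, use Dedekind's law to write $G=H(G\cap K)$, and apply Lemma~\ref{c1} (whose alternative (b) is excluded because $2\nmid m$); but that lemma requires $G\cap K\le\langle z\rangle$, which need not hold a priori, and this is the real difficulty. Instead, suppose $|X/X'|=2$. Not both of $y,w$ lie in $X'$ (else $X=HK\le X'$), so, interchanging the roles of $H$ and $K$ if necessary, assume $w\notin X'$. Then $|X'|=2mn$ with $mn$ odd, so a Sylow $2$-subgroup of $X'$ has order $2$, and Burnside's normal $p$-complement theorem gives a characteristic subgroup $N$ of $X'$ with $|N|=mn$; since $X'\lhd X$, also $N\lhd X$, and $N$ is precisely the set of odd-order elements of $X'$. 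Hence $\langle z\rangle=K'\le X'$ lies in $N$, while $K\cap X'=\langle z\rangle$ — the unique subgroup of index $2$ in $K\cong\DD_{2m}$, as $m$ is odd — so $K\cap N=\langle z\rangle$. Therefore $KN$ is a subgroup of order $|K|\,|N|/|K\cap N|=2mn=|X|/2$, i.e.\ of index $2$, yet $w\in K\le KN$ and $w\notin X'$, so $KN\ne X'$. This produces two distinct subgroups of index $2$, contradicting $|X/X'|=2$.

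The only step with genuine content is this last one: the surplus factor $2$ in $|X'|$ forces off a normal odd-order complement $N$, and $N$ together with $K$ generates an unexpected second subgroup of index $2$, which is impossible once $X/X'$ is cyclic of order $2$. Everything else — that $H'=\langle x\rangle$ and $K'=\langle z\rangle$, that the two index-$2$ kernels coincide with $H\langle z\rangle$ and $\langle x\rangle K$, and that the resulting factorisations are semidirect products with trivial intersections (here one uses $\langle x\rangle\cap\langle z\rangle\le H\cap K=1$) — is a routine order count.
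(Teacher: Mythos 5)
Your proof is correct, but it takes a genuinely different route from the paper's. The paper picks a maximal subgroup $G\geq H$, writes $G=H(G\cap K)$ by Dedekind's law, and splits into cases according to whether $G\cap K\leq\lg z\rg$: the first case is settled by Lemma~\ref{c1} (alternative (b) being excluded by parity), the second by passing to $X/G_X$ and invoking Corollary~\ref{primitive}(a), so the argument ultimately rests on the classification of solvable quasiprimitive groups with a regular cyclic or dihedral subgroup. You instead work with the derived subgroup: since $m,n$ are odd, $H'=\lg x\rg$ and $K'=\lg z\rg$ force $X/X'$ to be an elementary abelian $2$-group generated by $\bar y,\bar w$, nontrivial by Monakhov's theorem (Proposition~\ref{solvable}); the case $|X/X'|=2$ is eliminated by applying Burnside's normal $2$-complement theorem to $X'$ (of order $2mn$ with $mn$ odd), which yields $N\lhd X$ of order $mn$ with $K\cap N=\lg z\rg$ and hence a second index-$2$ subgroup $KN\neq X'$ — impossible, since every index-$2$ subgroup contains $X'$; once $X/X'\cong\CC_2\times\CC_2$, the two index-$2$ kernels are identified with $H\lg z\rg$ and $\lg x\rg K$ by a cardinality count using $H\cap K=1$. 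All the individual steps check out ($[x,y]=x^{-2}$ generates $\lg x\rg$ for odd $n$; $K\cap X'=\lg z\rg$ when $w\notin X'$; $|KN|=2mn$). What your approach buys is elementarity and self-containment — only Proposition~\ref{solvable} and a classical transfer fact, rather than the CFSG-based Corollary~\ref{primitive} — and it produces both normal factorizations simultaneously; what the paper's approach buys is uniformity with Lemmas~\ref{c1} and~\ref{a1}, whose case analysis is reused elsewhere and which your argument does not replace.
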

\begin{proof}
The hypothesis implies that the roles of $H$ and $K$ in $X$ are symmetric, so
 it suffices to show $X=(H\lg z\rg)\rtimes\lg w\rg$.

Let $G$ be a maximal subgroup $G$ containing $H$.
Then $G=G\cap X=G\cap HK=H(G\cap K)$.
If  $G\cap K\leq\lg z\rg$, then the result follows from Lemma~\ref{c1} (note that the case~(b)
cannot appear since both $m$ and $n$ are odd).

Suppose now $G\cap K\not\leq\langle z\rangle$. Then $G\cap K=\langle z_1,w_1\rangle$
for some $z_1\in\langle z\rangle$ and $w_1\in K\backslash\langle z\rangle$.
We may assume $w_1=w$, and so $G=H\langle z_1,w\rangle$. As before, we have
\[
\langle z_1\rangle\leq\bigcap_{k,l\in\ZZ} G^{z^kw^l}=\bigcap_{i,j,k,l\in\ZZ}G^{x^iy^jz^kw^l}=G_X,
\]
and the quotient group $\ox:=X/G_X$ is a primitive permutation group on $[\ox,\og]$ with
a stabilizer $\og$. Since $\langle \bar z\rangle$ is cyclic and transitive, it is
a regular subgroup of $\ox$. Now Corollary~\ref{primitive}(a) is applicable, we have
\[
\CC_p\cong\langle \bar z\rangle\leq\bar X\leq\AGL(1,p)\quad \text{for some odd prime $p$}.
\]
Thus, $z_1=z^p$ and the stabilizer $\og$ is cyclic. Since $\oh$ is a cyclic homomorphic
image of the dihedral group $H$ and $\oh\leq\og$,  we have
$\oh=\bar 1$ or $\oh=\langle\bar y\rangle\cong\CC_2$. In
either case, $x\in G_X$.

If $\oh=1$, then  $G_X=H\langle z^p\rangle$, and so $\ox=\langle\bar z^p\rangle\rtimes\langle\bar w\rangle$.
Therefore, $X=(H\langle z\rangle)\rtimes\langle w\rangle.$ If $\oh=\lg\bar y\rg$, then
$G_X=\lg x\rg\lg z^p\rg$ and $\ox=\langle\bar z,\bar w\rangle\rtimes \langle\bar y\rangle.$
Note that $\lg\bar z\rg\ch \lg \bar z,\bar w\rg\lhd \ox$, we have
$\lg\bar z\rg\lhd\ox$. Therefore, $X=(H\lg z\rg)\rtimes\lg w\rg,$ as desired.
\end{proof}

\begin{corollary}\label{Normal}
If both $m$ and $n$ are odd, then $X=(\lg z\rg_X\rtimes H).K=(\lg x\rg_X\rtimes K).H$
\end{corollary}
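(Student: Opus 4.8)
The plan is to prove the first identity; the second follows by interchanging the roles of $H$ and $K$, which is legitimate since $m$ and $n$ are both odd and Lemma~\ref{m-n-odd} is symmetric in the two factors. Put $Z:=\lg z\rg_X$ and $L:=H\lg z\rg$. By Lemma~\ref{m-n-odd}, $L\lhd X$ with $X=L\rtimes\lg w\rg$, and symmetrically $\lg x\rg K\lhd X$ with $X=(\lg x\rg K)\rtimes\lg y\rg$. Dedekind's modular law gives $(\lg x\rg K)\cap L=\big((\lg x\rg K)\cap H\big)\lg z\rg$, and $(\lg x\rg K)\cap H$ is a proper subgroup of $H\cong\DD_{2n}$ containing the index-$2$ subgroup $\lg x\rg$ (note $y\notin\lg x\rg K$ because $X=(\lg x\rg K)\rtimes\lg y\rg$), hence equals $\lg x\rg$; therefore $\lg x\rg\lg z\rg=(\lg x\rg K)\cap L$ is a subgroup of $L$. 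Also $Z\cap H\le\lg z\rg\cap H\le K\cap H=1$, so $ZH=Z\rtimes H$ is a subgroup of $L$, and the whole task reduces to showing $ZH\lhd X$.

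First I would pin down $Z$ inside $L$. Since $w$ inverts $z$ we have $\lg z\rg^w=\lg z\rg$, and since $L^w=L$ one computes $(\lg z\rg_L)^w=(\lg z\rg^w)_L=\lg z\rg_L$; as $\lg z\rg_L$ is normalised by $L$ and by $w$ and $X=\lg L,w\rg$, it is normal in $X$, so $\lg z\rg_L\le\lg z\rg_X$ and hence $\lg z\rg_L=\lg z\rg_X=Z$. Now pass to $\ox:=X/Z$. Then $\ol:=L/Z\lhd\ox$ is an exact product of $\oh\cong\DD_{2n}$ and $\lg\overline z\rg\cong\CC_{m_1}$; the cyclic factor has trivial core in $\ol$ (the preimage of that core in $L$ would be a normal subgroup of $L$ inside $\lg z\rg$, hence inside $\lg z\rg_L=Z$), and $\lg\overline x\rg\lg\overline z\rg$ is a subgroup of $\ol$ (it is the image of $\lg x\rg\lg z\rg$). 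If $m_1=1$ then $Z=\lg z\rg$ and $ZH=L\lhd X$ already; otherwise Proposition~\ref{ac} applies to $\ol$ and yields $\oh\lhd\ol$.

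It remains to upgrade $\oh\lhd\ol$ to $\oh\lhd\ox$. Since $n$ is odd, $[\oh,\oh]=\lg\overline x\rg$; writing $\ol=\oh\rtimes\lg\overline z\rg$ and using that conjugation by $\overline z$ is an automorphism of $\oh\cong\DD_{2n}$ preserving its rotation subgroup $\lg\overline x\rg$ and permuting its reflections, one checks that each of the three generating commutators $[\overline x,\overline y]$, $[\overline x,\overline z]$, $[\overline y,\overline z]$ lies in $\lg\overline x\rg$, so $[\ol,\ol]=\lg\overline x\rg$ and the abelianisation $\ol/\lg\overline x\rg$ is $\CC_2\times\CC_{m_1}\cong\CC_{2m_1}$, which is cyclic. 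Consequently $\lg\overline x\rg=[\ol,\ol]$ is characteristic in $\ol$, and $\oh/\lg\overline x\rg$, being the unique subgroup of order $2$ of the cyclic group $\ol/\lg\overline x\rg$, is characteristic there; hence $\oh$ is characteristic in $\ol$, and since $\ol\lhd\ox$ we get $\oh\lhd\ox$. Pulling back through $X\to X/Z$ turns this into $\lg z\rg_X\rtimes H=ZH\lhd X$, and since $(\lg z\rg_X\rtimes H)K\supseteq HK=X$ we obtain $X=(\lg z\rg_X\rtimes H)K$, i.e.\ $X=(\lg z\rg_X\rtimes H).K$. The step I expect to be the main obstacle is exactly this last transfer of normality across the index-$2$ extension $\ol\le\ox$: it hinges on identifying $[\ol,\ol]=\lg\overline x\rg$ so that $\ol/\lg\overline x\rg$ is cyclic and $\oh$ can be recovered as the full preimage of its unique involution.
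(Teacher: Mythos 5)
Your proof is correct and follows essentially the same route as the paper: reduce to the normality of $\lg z\rg_X\rtimes H$, pass to $X/\lg z\rg_X$, apply Proposition~\ref{ac} to get $\oh$ normal in the image of $H\lg z\rg$, and then argue that $\oh$ is characteristic there so that normality transfers to the whole group. The only local differences are that you obtain the subgroup $\lg x\rg\lg z\rg$ via Dedekind's law applied to the two decompositions from Lemma~\ref{m-n-odd} rather than via Proposition~\ref{DM}, and you prove $\oh$ characteristic by realizing it as the preimage of the unique involution of the cyclic quotient $\ol/[\ol,\ol]=\ol/\lg\overline{x}\rg$, whereas the paper notes that $\oh$ is generated by all involutions of its odd-index overgroup; both variants are valid.
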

\begin{proof}
By Lemma~\ref{m-n-odd}, $X=(H\lg z\rg)\rtimes\lg w\rg$ and  the maximal
 subgroup $G:=H\lg z\rg$ of $X$ is an exact product
of the dihedral subgroup $H=\lg x\rg\rtimes\lg y\rg\cong\DD_{2n}$ and the
 cyclic subgroup $\lg z\rg\cong\CC_m$.
Since both $m$ and $n$ are odd, by Lemma~\ref{DM} we get $H\lg z\rg=(\lg x\rg\lg z\rg)\rtimes\lg y\rg$,
 and so $X=(\lg x\rg\lg z\rg)\rtimes\lg y\rg)\rtimes\lg w\rg.$ It is clear that $\lg z\rg_X=\lg z\rg_G$.
Thus, the quotient group $\og:=G/\lg z\rg_X$
contains a dihedral subgroup $\oh=\lg\bar x,\bar y\rg$ with a core-free cyclic complement
 $\lg\bar z\rg$. Since $\lg\bar x\rg\lg\bar z\rg\leq\og$,
by Proposition~\ref{ac}, we get $\oh\lhd \bar G$, and so $\og=\oh\rtimes\lg\bar z\rg$.
Since $\bar z$ has odd order,  $\bar g\in H$ for all $\bar g\in\og$ with $\bar g^2=1$,
so $\oh=\langle\bar g\in\og|\bar g^2=1\rg$. Thus $\oh\ch\og\lhd\ox$, and so $\oh\lhd\ox$.
Therefore, $X=(\lg z\rg_X\rtimes H).K$, as claimed.
\end{proof}

\section{Classification}\label{sec:class}
In this section, we utilize the results  from the previous section to provide a proof of Theorem~\ref{main}.
\begin{proof}[Proof of Theorem \ref{main}]
By Corollary~\ref{Normal}, we have $X=(\lg z\rg_X\rtimes H).K=(\lg x\rg_X\rtimes K).H$.
Therefore, $X$  contains the following two chains of
subnormal subgroups with cyclic factors:
\begin{equation}\label{m0}
1\leq\lg z\rg_X< \lg z\rg_X\rtimes \lg x\rg\leq\lg z\rg_X\rtimes H\leq (\lg z\rg_X\rtimes H).\lg z\rg<
\big((\lg z\rg_X\rtimes H).\lg z\rg\big) \rtimes \lg w\rg=X.
\end{equation}
and
\begin{equation}\label{n0}
1\leq\lg x\rg_X< \lg x\rg_X\rtimes \lg z\rg\leq\lg x\rg_X\rtimes K\leq (\lg x\rg_X\rtimes K).\lg x\rg<
\big((\lg x\rg_X\rtimes K).\lg z\rg\big) \rtimes \lg y\rg=X.
\end{equation}

Let $z_1:=z^{m_1}$ be a generator of  $\lg z\rg_X$, where $m_1$ is a positive divisor of $m$.
By $N/C$ theorem, $X/C_X(z_1)$ is abelian,  so its subgroup $H C_X(z_1)/C_X(z_1)$ is
also abelian. Note that $H C_X(z_1)/C_X(z_1)$ is a homomorphic image of the
dihedral group $H=\lg x\rg\rtimes\lg y\rg\cong\DD_{2n}$ with $n$ odd,
we must have $x\in C_X( z_1)$. Thus,
\begin{eqnarray}\label{m1}
z_1^x=z_1,\quad z_1^y=z_1^{u}
\end{eqnarray}
for some $u\in\ZZ_{m/m_1}$.

Consider $\ox:=X/\lg z_1\rg=\oh\rtimes\ok$. Since $\lg\bar x\rg\ch\oh\lhd\ox$, we have
 $\lg\bar x\rg\lhd \ox$. Thus, by the $N/C$ theorem again, $\bar z\leq C_{\ox}(\bar x)$.
It follows that the following relations hold in $\ox$:
\[
{\bar x}^{\bar z}={\bar x},\quad {\bar y}^{\bar z}={\bar x}^{R}{\bar y},\quad {\bar x}^{\bar w}={\bar x}^{S},
\quad{\bar y}^{\bar w}={\bar x}^{T}{\bar y},
\]
where $R,S,T\in\ZZ_n$. Consequently,   we obtain the following relations in $X$:
\begin{eqnarray}\label{m2}
x^z=xz_1^{e},\quad y^z=z_1^{a}x^{R}y,\quad x^w=x^{S}z_1^{b},
\quad y^w=yx^{T}z_1^{c}
\end{eqnarray}
or written in an equivalent form:
\begin{eqnarray}\label{m2-2}
[x,z]=z_1^{e},\quad [z,y]=x^{R}z_1^{a},\quad [x,w]=x^{S-1}z_1^{b}, \quad [y,w]=x^{T}z_1^{c},
\end{eqnarray}
where $e,a,b,c\in\ZZ_{m/m_1}$.

On the other hand, due to the formal symmetry between $H$ and $K$ in $X$, we can also
 consider the core $\lg x\rg_X$ of $\lg x\rg$ in $X$ and the corresponding quotient group $\ox:=X/\lg x\rg_X$.
Let $x_1:=x^{n_1}$ be a generator of $\lg x\rg_X$, where $n_1$ is a positive divisor
of $n$. Using a similar argument as before, it is straightforward to show that
$z\in C_X(x_1)$ and $\lg \bar z\rg\lhd\ox$.  Now we can deduce from \eqref{n0} and
\eqref{m2-2} that $R,T\equiv0\pmod{n_1}$ and $S\equiv1\pmod{n_1}$. Therefore, we may set
\[
R=n_1r, \quad S=1+n_1s \quad\text{and}\quad T=n_1t,\qquad\text{where $r,s,t\in\ZZ_{n/n_1}$}.
\]
Then the relations in \eqref{m2-2} can be expressed in the following symmetric form:
\begin{equation*}
[x,z]=z_1^{e},\quad [z,y]=x_1^{r}z_1^a, \quad [x,w]=x_1^{s}z_1^{b},\quad[y,w]=x_1^{t}z_1^c,
\end{equation*}
or written in an equivalent form as follows:
\begin{equation*}
x^z=xz_1^{e},  \quad y^z=x_1^{r}z_1^ay, \quad x^w=xx_1^{s}z_1^{b},\quad y^w=yx_1^{t}z_1^c.
\end{equation*}
Therefore, $X=\langle x,y,z,w\rangle$ has the following defining relations
\begin{equation}\label{m3-1-4}
\begin{aligned}
&x^n=y^2=z^m=w^2=1, x_1=x^{n_1}, z_1=z^{m_1},\\
&z_1^x=z_1, z_1^y=z_1^{u}, z_1^z=z_1, z_1^w=z_1^{-1},\\
&x^y=x^{-1}, x^z=xz_1^{e},  x^w=xx_1^{s}z_1^{b},\\
&y^z=x_1^{r}z_1^ay,   y^w=yx_1^{t}z_1^c, z^{w}=z^{-1}.
\end{aligned}
\end{equation}

In the subsequent analysis, we utilize Proposition~\ref{Cext} to exam the relations in
\eqref{m3-1-4} and  establish the numerical conditions on the parameters.

Since $y^2=1$ and $z_1^y=z_1^u$, we have $z_1=z_1^{y^2}=(z_1^u)^y=(z_1^y)^u=z_1^{u^2}$, and so
\begin{eqnarray}\label{cond1}
u^2\equiv 1\pmod {m/m_1}.
\end{eqnarray}
Moreover, from $x^y=x^{-1}$, $z_1=z^{m_1}$, $x^z=xz_1^e$ and $y^z=x_1^{r}z_1^ay$ we derive that
\begin{align*}
1&=x^n=(x^n)^z=(x^z)^n=x^nz_1^{ne};\\
1&=y^2=(y^2)^z=(y^z)^2=(x^{r}z_1^ay)^2=z_1^{a(u+1)};\\
x&=x^{z_1}=x^{z^{m_1}}=(xz_1^e)^{z^{m_1-1}}=x^{z^{m_1-1}}z_1^{e}=xz_1^{m_1e};\\
z_1^{u-1}y&=y^{z_1}=y^{z^{m_1}}=(x_1^{r}z_1^ay)^{z^{m_1-1}}=x_1^{r}z_1^ay^{z^{m_1-1}}=x_1^{m_1r}z_1^{m_1a}y;\\
x^{-1}z_1^{-e}&=(x^{-1})^z=(x^y)^z=(x^z)^{y^z}=(xz_1^e)^{x_1^{r}z_1^ay}=(xz_1^e)^y=x^{-1}z_1^{ue},
\end{align*}
which imply the following congruences
\begin{align}
&ne\equiv0\pmod{m/m_1},\label{cond2}\\
&a(u+1)\equiv0\pmod{m/m_1},\label{cond3}\\
&m_1e\equiv0\pmod{m/m_1},\label{cond4}\\
&m_1r\equiv0\pmod{n/n_1},\label{cond5}\\
&u\equiv 1+m_1a\pmod{m/m_1},\label{cond6}\\
&(u+1)e\equiv0\pmod{m/m_1}.\label{cond7}
\end{align}
Combining equations \eqref{cond4}, \eqref{cond6} with \eqref{cond7}, we obtain
\[
2e\equiv(u+1)e+(u-1)e\equiv (1-u)e\stackrel{\eqref{cond6}}\equiv (u+1)e+m_1ae\stackrel{\eqref{cond4}}\equiv0\pmod{m/m_1}.
\]
 Since $m$ is odd, we deduce that $e\equiv0\pmod{m/m_1}$, which implies $[x,z]=1$. Consequently,
 the congruences \eqref{cond1}--\eqref{cond7} are reduced to
\begin{align}\label{condA}
a(u+1)\equiv0,\quad u\equiv 1+m_1a\pmod{m/m_1}\quad\text{and}\quad m_1r\equiv0\pmod{n/n_1}.
\end{align}

Furthermore, from $x^w=xx_1^{s}z_1^b$ and $y^w=yx^{t}z_1^c$ we infer that
\begin{align*}
1&=x^n=(x^n)^w=(x^w)^n=(xx_1^{s}z_1^b)^n=z_1^{nb};\\
1&=y^2=(y^2)^w=(y^w)^2=(yx^{t}z_1^c)^2=z_1^{c(u+1)};\\
x&=x^{w^2}=(x^{1+n_1s}z_1^b)^w=(x^w)^{1+n_1s}z_1^{-b}\\
&=(xx_1^{s}z_1^b)^{1+n_1s}z_1^{-b}=x^{1+n_1s(2+n_1s)}z_1^{n_1sb};\\
x^{-1}x_1^{-s}z_1^{-b}&=(x^{-1})^w=(x^y)^{w}=(x^w)^{y^w}=(xx_1^{s}z_1^b)^{yx_1^tz_1^c}\\
&=(x^{-1}x_1^{-s}z_1^{bu})^{x_1^tz_1^c}=x^{-1}x_1^{-s}z_1^{bu};\\
yx_1^{t}z_1^c&=y^w=(y^w)^{z^wz}=(y^z)^{wz}=(x_1^{r}z_1^ay)^{wz}\\
&=\big((x_1^{1+sn_1}z_1^{bn_1})^{r}z_1^{-a}yx_1^{t}z_1^c\big)^z
=x_1^{r(1+sn_1)}z_1^{bn_1r-a}y^zx_1^{t}z_1^c\\
&=x_1^{r(1+sn_1)}z_1^{bn_1r-a}(x_1^{r}z_1^ay)x_1^{t}z_1^c=yx_1^{t-r(2+sn_1)}z_1^{ubn_1r+c}.
\end{align*}
Thus, we obtain
\begin{equation}\label{condB}
\begin{aligned}
&nb\equiv0\pmod{m/m_1},\\
&c(u+1)\equiv0\pmod{m/m_1},\\
&s(2+n_1s)\equiv0\pmod{n/n_1},\\
&n_1sb\equiv0\pmod{m/m_1},\\
&b(u+1)\equiv0\pmod{m/m_1},\\
&r(2+sn_1)\equiv0\pmod{n/n_1},\\
&n_1rb\equiv0\pmod{m/m_1}.
\end{aligned}
\end{equation}

In addition, for any integer $k$, we have
\begin{align*}
(x^k)^w=(xx_1^{s}z_1^b)^k=x^kx_1^{sk}z_1^{bk}\quad\text{and}\quad (z^k)^y&=(x_1^rzz_1^a)^k=x_1^{rk}z^kz_1^{ak}.
\end{align*}
Observe that $[x,z]=1$, $x^y=x^{-1}$ and $z^w=z^{-1}$. Using the assumption
$\lg x\rg_X=\lg x^{n_1}\rg$ and $\lg z\rg_X=\lg z^{m_1}\rg$
we infer from the above equations that
\begin{eqnarray}\label{condC}
\begin{aligned}
&rk\equiv 0\pmod {n/n_1}\iff k\equiv0\pmod{m_1},\\
&bk\equiv 0\pmod {m/m_1}\iff k\equiv0\pmod{n_1}.
\end{aligned}
\end{eqnarray}

Finally, we see from \eqref{condA} that $u=1+m_1a\pmod{m/m_1}$. Therefore, upon substitution, the congruences in \eqref{condA}, \eqref{condB} and \eqref{condC} are reduced to the stated form given in the theorem.
By Proposition~\ref{Cext}, the group $X$ defined by the stated presentation and numerical conditions is well-defined,
 it contains two dihedral subgroups $H=\lg x,y\rg\cong\DD_{2n}$ and $K=\lg z,w\rg\cong\DD_{2m}$ with $H\cap K=\{1_X\}$. In particular,  it is straightforward to verify that $\lg x\rg_X=\lg x^{n_1}\rg$ and $\lg z\rg_X=\lg z^{m_1}\rg$, as required.
\end{proof}

\end{document}